\theoremstyle{plain}
\newtheorem{thm}{Theorem}[section]
\newtheorem{lemma}[thm]{Lemma}
\newtheorem{conj}[thm]{Conjecture}
\theoremstyle{definition}
\newtheorem{rmk}[thm]{Remark}
\newtheorem{example}[thm]{Example}
\def\dim{\mathop{\hbox {dim}}\nolimits}
\newcommand{\fra}{\mathfrak{a}}
\newcommand{\frg}{\mathfrak{g}}
\newcommand{\frh}{\mathfrak{h}}
\newcommand{\frk}{\mathfrak{k}}
\newcommand{\frl}{\mathfrak{l}}
\newcommand{\frp}{\mathfrak{p}}
\newcommand{\frq}{\mathfrak{q}}
\newcommand{\frt}{\mathfrak{t}}
\newcommand{\fru}{\mathfrak{u}}
\newcommand{\bbC}{\mathbb{C}}
\newcommand{\bbR}{\mathbb{R}}
\newcommand{\bbZ}{\mathbb{Z}}
\newcommand{\caL}{\mathcal{L}}
\newcommand{\caR}{\mathcal{R}}
\begin{document}

\title{A non-vanishing criterion for Dirac cohomology}

\author{Chao-Ping Dong}
\address[Dong]{School of Mathematical Sciences, Soochow University, Suzhou 215006,
P.~R.~China}
\email{chaopindong@163.com}

\abstract{This paper gives a criterion for the non-vanishing of the Dirac cohomology of $\caL_S(Z)$, where  $\caL_S(\cdot)$ is the cohomological induction functor, while the inducing module $Z$ is irreducible, unitarizable, and in the good range. As an application, we give a formula counting the number of strings in the Dirac series.
Using this formula, we classify all the irreducible unitary representations of $E_{6(2)}$ with non-zero Dirac cohomology. Our calculation continues to support Conjecture 5.7' of Salamanca-Riba and Vogan \cite{SV}. Moreover, we find more unitary representations for which cancellation happens between the even part and the odd part of their Dirac cohomology.
}

\endabstract

\subjclass[2010]{Primary 22E46}

\keywords{Dirac cohomology, non-vanishing, number of strings}

\maketitle


\section{Introduction}

A formula for the Dirac cohomology of cohomologically induced modules has been given in Theorem B of \cite{DH}. However, even if the inducing irreducible unitary module $Z$ has non-zero Dirac cohomolgy and lives in the good range, we do not know whether the cohomologically induced module $\caL_S(Z)$ has non-zero Dirac cohomology or not. The first aim of this note is to fix this problem.

We start with a complex connected  simple \emph{linear} group $G(\bbC)$ which has finite center.
Let $\sigma: G(\bbC) \to G(\bbC)$ be a \emph{real form} of $G(\bbC)$. That is, $\sigma$ is an antiholomorphic Lie group automorphism such that $\sigma^2={\rm Id}$. Let $\theta: G(\bbC)\to G(\bbC)$ be the involutive algebraic automorphism of $G(\bbC)$ corresponding to $\sigma$ via Cartan theorem (see Theorem 3.2 of \cite{ALTV}). Put $G=G(\bbC)^{\sigma}$ as the group of real points. Note that $G$ must be in the \emph{Harish-Chandra class} \cite{HC}. That is,
\begin{itemize}
\item[(a)] $G$ has only a finite number of connected components;
\item[(b)] the derived group $[G, G]$ has finite center;
\item[(c)]  the adjoint action $Ad(g)$ of any $g\in G$ is an inner automorphism of  $\frg=(\frg_0)_{\bbC}$.
\end{itemize}
Denote by $K(\bbC):=G(\bbC)^{\theta}$, and put $K:=K(\bbC)^{\sigma}$. Denote by $\frg_0={\rm Lie}\,(G)$, and let
$$
\frg_0=\frk_0\oplus \frp_0
$$
be the  Cartan decomposition corresponding to $\theta$ on the Lie algebra level.

Choose a maximal torus $T_f$ of $K$. Let $\frt_{f, 0}={\rm Lie}\, (T_f)$ and put $\fra_{f, 0}=Z_{\frp_0}(\frt_{f, 0})$. Let $A_f$ be the corresponding analytic subgroup of $G$. Then $H_f=T_f A_f$ is a $\theta$-stable fundamental Cartan subgroup of $G$. As usual, we drop the subscript for the complexification. For instance,
$$
\frh_f=\fra_f \oplus \frt_f
$$
is the Cartan decomposition of the complexified Lie algebra of $H_f$.

We fix a positive root system $\Delta^+(\frk, \frt_f)$ once for all. Denote by $\rho_K$ the half sum of roots in $\Delta^+(\frk, \frt_f)$. Then there are $s$ ways of choosing positive roots systems of $\Delta(\frg, \frt_f)$  containing the fixed $\Delta^+(\frk, \frt_f)$. Here
\begin{equation}\label{s}
 s=\frac{\#W(\frg, \frt_f)}{\#W(\frk, \frt_f)},
\end{equation}
 where $W(\frk, \frt_f)$ (resp., $W(\frg, \frt_f)$) is the Weyl group of the root system $\Delta(\frk, \frt_f)$ (resp., $\Delta(\frg, \frt_f)$).
We will refer to a $K$-type by one of its highest weights.

Recall that any $\theta$-stable parabolic subalgebra $\mathfrak{q}$ of $\frg$ can be obtained by choosing an element $H\in i\frt_{f,0}$, and setting $\mathfrak{q}$ as the sum of  non-negative eigenspaces of ${\rm ad}(H)$. The Levi subalgebra $\frl$ of $\mathfrak{q}$ is the sum of  zero eigenspaces of ${\rm ad}(H)$. Note that $\frl$ and $\mathfrak{q}$ are both $\theta$-stable since $H$ is so. Put $L=N_G(\mathfrak{q})$. Then $L\cap K$ is a maximal compact subgroup of $L$.

We choose a positive root system $\Delta^+(\frg, \frt_f)$ so that $\Delta(\fru, \frt_f)\subseteq \Delta^+(\frg, \frt_f)$. Let $\Delta^+(\frg, \frt_f)$ be the union of the fixed $\Delta^+(\frk, \frt_f)$ and $\Delta^+(\frp, \frt_f)$. We denote the half sum of roots in $\Delta(\fru)$ as $\rho(\fru)$. Put
$$
\Delta(\fru\cap\frk)=\Delta(\fru)\cap \Delta^+(\frg, \frt_f), \quad \Delta(\fru\cap\frp)=\Delta(\fru)\cap \Delta^+(\frp, \frt_f),
$$
and denote the half sum of roots in them by $\rho(\fru\cap\frk)$ and $\rho(\fru\cap\frp)$, respectively.
Note that
\begin{equation}\label{rhoukp}
\rho(\fru)=\rho(\fru\cap\frk)+\rho(\fru\cap\frp).
\end{equation}
Put
$$
\Delta^+(\frl, \frt_f)=\Delta(\frl, \frt_f) \cap \Delta^+(\frg, \frt_f), \quad \Delta^+(\frl\cap\frk, \frt_f)=\Delta(\frl, \frt_f) \cap \Delta^+(\frk, \frt_f).
$$
Denote the half sum of roots in $\Delta^+(\frl\cap\frk, \frt_f)$ by $\rho_{L\cap K}$.

Cohomological induction functors lifts an $(\frl, L\cap K)$-module $Z$ to $(\frg, K)$-modules $\caL_j(Z)$ and $\caR^j(Z)$, where $j$ are some non-negative integers.
The interesting thing usually happens at the middle degree $S:={\rm dim} \,(\fru\cap\frk)$. The results that we need about cohomological induction will be summarized in Theorem \ref{thm-Vogan-coho-ind}.

We will recall Dirac cohomology in Section 2. This notion was introduced by Vogan \cite{Vog97}. Motivated by his conjecture on Dirac cohomology proven by Huang and Pand\v zi\'c \cite{HP}, we say that a weight $\Lambda\in\frh_f^*$ satisfies the \textbf{Huang-Pand\v zi\'c condition} (\emph{HP condition} for short henceforth) if
\begin{equation}\label{Lambda-HP}
\{\delta-\rho_n^{(j)}\}+\rho_K=w \Lambda,
\end{equation}
where $\delta$ is any highest weight of some $K$-type, $0\leq j\leq s-1$, and $w\in W(\frg, \frt_f)$. Note that if $\Lambda$ satisfies the HP condition, then it must be \emph{real} in the sense of Definition 5.4.1 of \cite{V1}. That is, $\Lambda\in i\frt_{f,0}^* +\fra_{f,0}^*$.

\begin{thm}\label{thm-main}
Let $G$ be a simple linear real Lie group in the Harish-Chandra class. Let $Z$ be an irreducible unitary $(\frl, L\cap K)$-module with infinitesimal character $\lambda_L\in i\frt_{f,0}^*$ such that $\lambda_L$ is $\Delta^+(\frl\cap\frk, \frt_f)$-dominant. Assume that $\lambda_L+\rho(\fru)$ is good. That is,
$$\langle \lambda_L+\rho(\fru), \alpha^{\vee}\rangle>0, \quad \forall\alpha\in\Delta(\fru, \frt_f).
$$
Assume moreover that $\lambda_L +\rho(\fru)$ satisfies the HP condition. Then $H_D(\caL_S(Z))$ is non-zero if and only if $H_D(Z)$ is non-zero.
\end{thm}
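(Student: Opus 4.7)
The plan is to apply Theorem~B of \cite{DH}, which realizes $H_D(\caL_S(Z))$ as a virtual $\widetilde{K}$-module obtained from $H_D(Z)$ via a bottom-layer-type construction involving a $\rho(\fru\cap\frp)$-shift (cf.\ \eqref{rhoukp}); the HP hypothesis on $\lambda_L+\rho(\fru)$ will then be used to prevent cancellation in the resulting expression. For the direction $H_D(Z)=0\Rightarrow H_D(\caL_S(Z))=0$, the conclusion should be immediate from the formula: every $\widetilde{K}$-type contributing to $H_D(\caL_S(Z))$ is indexed by a $\widetilde{L\cap K}$-type from $H_D(Z)$, so an empty input forces an empty output. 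This implication uses only the good-range hypothesis, which via Theorem~\ref{thm-Vogan-coho-ind} ensures that $\caL_S(Z)$ is irreducible unitary with infinitesimal character $\lambda_L+\rho(\fru)$ and that the formula of \cite{DH} applies.

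For the converse, assume $H_D(Z)\neq 0$ and fix a spin $(L\cap K)$-type $\gamma$ appearing in $H_D(Z)$. By the Huang--Pand\v zi\'c theorem \cite{HP} applied inside $L$, the weight $\gamma+\rho_{L\cap K}$ lies in the $W(\frl,\frt_f)$-orbit of $\lambda_L$. Under the formula of \cite{DH}, $\gamma$ produces a candidate spin $K$-type $\gamma'$ in $H_D(\caL_S(Z))$ whose highest weight $\gamma'+\rho_K$ lies in the $W(\frg,\frt_f)$-orbit of $\lambda_L+\rho(\fru)$. The HP hypothesis on $\lambda_L+\rho(\fru)$ is exactly what is needed to guarantee that $\gamma'$ arises as $\{\delta-\rho_n^{(j)}\}$ for a genuine $K$-type $\delta$ via \eqref{Lambda-HP}, so the candidate is a legitimate nonzero contribution rather than a formal expression discarded by the bookkeeping.

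The main obstacle I anticipate is ruling out cancellations: two distinct spin $(L\cap K)$-types $\gamma_1,\gamma_2$ in $H_D(Z)$ could a priori map to the same spin $K$-type with opposite parities. To handle this, I would combine the strict good-range inequalities $\lan\lambda_L+\rho(\fru),\alpha^\vee\ran>0$ for $\alpha\in\Delta(\fru,\frt_f)$ with the HP hypothesis on $\lambda_L+\rho(\fru)$ to pin down the Weyl elements $w$ realizing \eqref{Lambda-HP} to a single coset of $W(\frl,\frt_f)\backslash W(\frg,\frt_f)$ relative to the reference $\Delta^+(\frg,\frt_f)$. A parity calculation comparing $|\Delta(\fru\cap\frp)|$ and $|\Delta(\fru\cap\frk)|$ against the lengths of these $w$ should then show that contributions coming from different $\gamma$'s share a common sign, so they accumulate rather than cancel. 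This would yield $H_D(\caL_S(Z))\neq 0$ and complete the proof.
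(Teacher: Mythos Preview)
Your proposal rests on a misreading of Theorem~B of \cite{DH}: the formula \eqref{Dirac-coho} is an honest $\widetilde{K}$-module isomorphism, not a virtual identity, so there is no cancellation to worry about between distinct $\widetilde{L\cap K}$-types in $H_D(Z)$. The entire parity/sign discussion in your third paragraph is therefore a red herring, and the length comparison you propose is not needed.

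The real obstacle is different and more concrete. Given a single $\widetilde{L\cap K}$-type $\gamma_L$ in $H_D(Z)$, the functor $\caL_S^{\widetilde{K}}$ applied to $\gamma_L\otimes\bbC_{-\rho(\fru\cap\frp)}$ yields the $\widetilde{K}$-type of highest weight $\gamma_G:=\gamma_L+\rho(\fru\cap\frp)$ \emph{provided} $\gamma_G+\rho_K$ is $\Delta^+(\frk,\frt_f)$-dominant integral regular; otherwise the output may vanish. The paper checks this directly. Writing $\gamma_L+\rho_{L\cap K}=w_1\lambda_L$ with $w_1\in W(\frl,\frt_f)$, one has $\gamma_G+\rho_K=w_1(\lambda_L+\rho(\fru))$ since $w_1$ fixes $\rho(\fru)$. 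Dominance and regularity for $\Delta^+(\frk,\frt_f)$ then follow from the strict good-range inequalities on $\Delta(\fru\cap\frk)$ together with the $\Delta^+(\frl\cap\frk)$-dominant-regularity of $\gamma_L+\rho_{L\cap K}$. The HP hypothesis enters only for \emph{integrality}: by Lemma~\ref{lemma-linear} (which crucially uses that $G$ is linear), any $K$-type highest weight $\delta$ is $\Delta(\frg,\frt_f)$-integral, so \eqref{Lambda-HP} and \eqref{Ktilde-inf-char} force $\lambda_L+\rho(\fru)$, and hence its entire $W(\frg,\frt_f)$-orbit, to be $\Delta(\frg,\frt_f)$-integral, in particular $\Delta(\frk,\frt_f)$-integral. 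Your outline never isolates integrality as the issue and never invokes the linearity lemma, so as written it would not close; your vague statement that HP makes $\gamma'$ ``a legitimate nonzero contribution'' is gesturing in the right direction but misses the mechanism.
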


\begin{rmk}\label{rmk-thm-DI-SUpq-Aqlambda}
(a)\ If $\lambda_L+\rho(\fru)$, a representative vector of the infinitesimal character of $\caL_S(Z)$, does not satisfy the HP condition, then $H_D(\caL_S(Z))$ must be zero in view of Theorem \ref{thm-HP}.

\smallskip

\noindent (b) If we further assume $G$ to be connected, then $L$ is connected as well. In this case, the proof will say that $\gamma_L\mapsto \gamma_L +\rho(\fru\cap\frp)$ is a multiplicity-preserving bijection from the $\widetilde{K_L}$-types of $H_D(Z)$ to the $\widetilde{K}$-types of $H_D(\caL_S(Z))$. Here $K_L:=K\cap L$, and $\widetilde{K_L}$ is the pin double covering group of $K_L$. This completely extends Theorem 6.1 of \cite{D13} to real linear groups.

\smallskip

\noindent (c)\ Example \ref{exam-weakly-good} will tell us that the good range condition in the above theorem can \emph{not} be weakened, say, to be weakly good.
\end{rmk}

Recall that in \cite{D20}, a finiteness result has been given on the classification of $\widehat{G}^d$---the set of all equivalence classes of irreducible unitary $(\frg, K)$-modules with non-zero Dirac cohomology.
As suggested by Huang, we call the set $\widehat{G}^d$ the \emph{Dirac series} of $G$. Theorem \ref{thm-main} allows us to completely determine the number of strings in $\widehat{G}^d$, see Section \ref{sec-number-strings}. Using this formula, we classify the Dirac series for the group $E_{6(2)}$ as follows.

\begin{thm}\label{thm-EII}
The set $\widehat{E_{6(2)}}^d$ consists of $56$ fully supported scattered representations (see Section \ref{sec-appendix}) whose spin lowest $K$-types are all unitarily small, and $576$ strings of representations. Each spin-lowest $K$-type of any Dirac series representation of $E_{6(2)}$ occurs with multiplicity one.
\end{thm}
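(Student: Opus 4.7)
The plan is to combine Theorem \ref{thm-main} and the string-counting formula from Section \ref{sec-number-strings} with an atlas-assisted enumeration. First, I would enumerate, up to $K$-conjugacy, all $\theta$-stable parabolic subalgebras $\mathfrak{q}=\frl+\fru$ of $\frg$ in the real form $E_{6(2)}$. For each proper Levi $L$, the Dirac series $\widehat{L}^d$ is available inductively: every such $L$ has simple factors that are either of classical type or of lower $E$-rank, so $\widehat{L}^d$ is known from earlier work.

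By Theorem \ref{thm-main}, each $Z\in\widehat{L}^d$ whose infinitesimal character $\lambda_L$ is such that $\lambda_L+\rho(\fru)$ lies in the good range and satisfies the HP condition produces $\caL_S(Z)\in \widehat{E_{6(2)}}^d$. Letting the good-range parameter vary along a positive ray in the center of $\frl$ then produces an entire string. Summing the count over conjugacy classes of pairs $(\mathfrak{q},Z)$ via the formula of Section \ref{sec-number-strings}, and using Remark \ref{rmk-thm-DI-SUpq-Aqlambda}(b) to transfer the multiplicity-one property of spin lowest $K_L$-types in $\widehat{L}^d$ to spin lowest $K$-types in $\widehat{E_{6(2)}}^d$, should give the claimed $576$ strings.

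For the scattered part, I would use the finiteness theorem of \cite{D20}, which implies that any scattered representation has spin lowest $K$-type in a finite, explicit set (the unitarily small $K$-types), so that its infinitesimal character lies in a correspondingly finite list of real weights satisfying the HP condition. For each such candidate datum I would use atlas to enumerate the irreducible unitary $(\frg,K)$-modules with that infinitesimal character and spin lowest $K$-type, compute Dirac cohomology, and discard those with zero Dirac cohomology or those already counted as the base of some string. This should yield exactly the $56$ fully supported scattered representations asserted, each with multiplicity-one spin lowest $K$-type.

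The main obstacle I expect is the scattered enumeration. One must prove that the atlas search space is exhaustive, which requires a sharp a priori bound on the possible infinitesimal characters of scattered representations of $E_{6(2)}$, and one must carefully handle the possible cancellation between the even and odd parts of Dirac cohomology mentioned in the abstract, so that a Dirac index computation is not mistaken for a Dirac cohomology computation. On the string side, the subtle point is to avoid overcounting when two distinct pairs $(\mathfrak{q},Z)$ produce equivalent modules $\caL_S(Z)$, and to verify Levi-by-Levi that the HP condition does propagate under the shift by $\rho(\fru)$ exactly as needed for Theorem \ref{thm-main} to apply.
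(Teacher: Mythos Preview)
Your overall architecture---split into fully supported scattered representations plus strings, with an \texttt{atlas}-assisted enumeration---matches the paper's. But your plan for the string count differs from the paper's and has a genuine gap.

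You propose to work inductively through the Levi factors, assuming $\widehat{L}^d$ is already known for every proper $L$ and then inducing up via Theorem~\ref{thm-main}. The paper does \emph{not} do this. It works entirely on the $G$ side: for each proper subset $S$ of the simple roots it enumerates, directly in \texttt{atlas}, the Dirac series representations of $E_{6(2)}$ whose KGB element has support $S$ and whose infinitesimal character lies in the finite set $\Omega(S)$ of Section~\ref{sec-number-strings}. This avoids needing the Dirac series of Levi factors such as $Spin(5,5)$ or $SU(3,3)$ as input. More importantly, the counting formula of Section~\ref{sec-number-strings} that you invoke is explicitly \emph{conditional} on Conjecture~\ref{conj-unitarity-from-G-to-L-HP} and on the binary condition stated there; the paper devotes an entire subsection (Section~6.2) to verifying both for $E_{6(2)}$ by a separate \texttt{atlas} enumeration over all non-fully-supported KGB elements. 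Without that verification the formula cannot be applied, and your proposal omits this step entirely.

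For the scattered part your outline is correct in spirit but vague where the paper is concrete. The a~priori bound you anticipate comes not from abstract finiteness but from Lemma~\ref{lemma-E62-HP} (forcing $a+c>0$, $b+d>0$, etc.) together with the bound $\|\Lambda-\theta_x\Lambda\|\le\|2\rho\|$, producing an explicit set $\Phi$ of $58061$ candidate infinitesimal characters. All but $21$ of these are eliminated not by computing Dirac cohomology but by the Dirac inequality along Vogan pencils, cf.~\eqref{Dirac-inequality-improved}; only the survivors require direct unitarity checks. Also, the separation into ``fully supported scattered'' versus ``string'' is decided purely by whether the KGB element $x$ is fully supported, so your proposed step ``discard those already counted as the base of some string'' is not how the two families are actually distinguished. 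Finally, the u-small claim for spin lowest $K$-types is not obtained from your search procedure at all: it follows a~posteriori from the fact that every fully supported scattered infinitesimal character has $\{0,1\}$ coordinates, via Lemma~2.2 of \cite{DW21-1}.
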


In the above theorem, the notion of fully supported scattered representation will be recalled in Section \ref{sec-scattered-FS-scattered}, that of spin-lowest $K$-type will be recalled in Section 6, and that of unitarily small $K$-type comes from \cite{SV}. We sort the statistic $\|\nu\|^2$ for these $56$ fully supported scattered representations as follows:
$$
4.5, 8, \left(8.5\right)^{\underline 2},10^{\underline 8},
\left(10.5\right)^{\underline 4}, 13^{\underline 6}, 14^{\underline 2}, 15^{\underline 2}, 17^{\underline 4}, \left(17.5\right)^{\underline 4}, 18^{\underline 2}, 29^{\underline 8},
29.5, 30^{\underline 9}, 42, 78,
$$
where $a^{\underline k}$ means that the value $a$ occurs $k$ times. Note that the original Helgason-Johnson bound (see \cite{HJ}) $\|\rho(G)\|^2=78$ is attained at the trivial representation, while the sharpened Helgason-Johnson bound (see \cite{D21}) $42$ is attained at the minimal representation, namely, the first entry of Table \ref{table-EII-111011}. Numerically, one sees that there is still a remarkable gap between $30^{\underline 9}$ and $42$. We will pursue this later.

Among the above $56$ fully supported scattered members of $\widehat{E_{6(2)}}^d$, cancellation happens within the Dirac cohomology for $10$ of them when passing to \emph{Dirac index}. Recall that Dirac index of $\pi$ is defined as the following virtual $\widetilde{K}$ module:
\begin{equation}\label{DI}
{\rm DI}(\pi) = H_D^+(\pi) - H_D^-(\pi).
\end{equation}
Here $H_D^+(\pi)$ (resp., $H_D^-(\pi)$) is the even (resp., odd) part of $H_D(\pi)$. See \cite{MPVZ}.

The first instance of the cancellation phenomenon is recorded in Example 6.3 of \cite{DDY} on \texttt{F4\_s},
which disproves Conjecture 10.3 of \cite{H15}.  It is worth noting that \texttt{F4\_s} is also a \emph{quaternionic} real form, a notion raised by Wolf \cite{Wo61} in 1961. See also Appendix C of Knapp \cite{Kn}. Let us sharpen Conjecture 10.3 of \cite{H15} to be the following one.

\begin{conj} Further assume that $G$ is equal rank. Let $\pi$ be any irreducible unitary $(\frg, K)$ module such that $H_D(\pi)$ is non-zero. Then the Dirac index of $\pi$ must vanish if ${\rm Hom}_{\widetilde{K}}(H_D^+(\pi), H_D^-(\pi))\neq 0$.
\end{conj}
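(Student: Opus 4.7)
The plan is to translate the hypothesis ${\rm Hom}_{\widetilde K}(H_D^+(\pi), H_D^-(\pi)) \neq 0$ into a structural condition on the infinitesimal character $\Lambda$ of $\pi$, to reduce to scattered representations via Theorem \ref{thm-main}, and then to argue that this condition forces the full vanishing of the Dirac index. Since $G$ is equal rank, $\frh_f = \frt_f$ is a compact Cartan subalgebra, and the Huang-Pand\v zi\'c theorem gives that every $\widetilde K$-type $\gamma$ of $H_D(\pi)$ satisfies $\gamma + \rho_K = w\Lambda$ for some $w \in W(\frg,\frt_f)$. The splitting $S = S^+ \oplus S^-$ of the spin module of $\frp$ induces the grading $H_D^+(\pi) \oplus H_D^-(\pi)$ and attaches a sign $\epsilon(w) \in \{\pm 1\}$ to each Weyl contribution. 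Thus the hypothesis produces two elements $w_1, w_2 \in W(\frg,\frt_f)$ with $w_1\Lambda = w_2\Lambda$ but $\epsilon(w_1) \neq \epsilon(w_2)$; equivalently, the stabilizer $W_\Lambda := {\rm Stab}_{W(\frg,\frt_f)}(\Lambda)$ contains an element $\sigma = w_2^{-1} w_1$ with $\epsilon(\sigma) = -1$.

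Next, I would reduce to scattered representations. Part (b) of Remark \ref{rmk-thm-DI-SUpq-Aqlambda} provides a multiplicity-preserving bijection $\gamma_L \mapsto \gamma_L + \rho(\fru\cap\frp)$ between the $\widetilde{K_L}$-types of $H_D(Z)$ and the $\widetilde K$-types of $H_D(\caL_S(Z))$. This bijection shifts the grading by an overall parity determined by $\dim(\fru\cap\frp) \bmod 2$, so both the hypothesis and the conclusion of the conjecture transport between $Z$ and $\caL_S(Z)$. Since $\widehat G^d$ decomposes into finitely many fully supported scattered representations plus strings built from lower Levi factors (Section \ref{sec-number-strings}), it suffices to verify the conjecture for the finitely many scattered members.

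For a scattered $\pi$, the goal is to show that the odd-sign element $\sigma \in W_\Lambda$ pairs up the $\widetilde K$-types of $H_D^+(\pi)$ with those of $H_D^-(\pi)$ multiplicity-preservingly, giving $H_D^+(\pi) \cong H_D^-(\pi)$. Character-theoretically, this amounts to showing that the character of ${\rm DI}(\pi) = \chi_\pi \cdot (\chi_{S^+}-\chi_{S^-})$ on $\widetilde T_f$ is $\sigma$-antisymmetric and hence vanishes after summing. The hard part, and the main obstacle, is to promote such a character identity to a genuine $\widetilde K$-module isomorphism: one needs to show that the set $W_\pi := \{w\in W(\frg,\frt_f) : w\Lambda - \rho_K \text{ is a }\widetilde K\text{-type of } H_D(\pi)\}$ is stable under right multiplication by $\sigma$, and that the multiplicities, which depend on the $K$-type decomposition of $\pi$, match across this pairing. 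Bridging from the character statement to the module statement will likely require fine information about spin-lowest $\widetilde K$-types and the Langlands parameters of scattered $\pi$, computable in practice with the atlas software. As preliminary evidence, one should verify the conjecture directly on the ten scattered cancellation members of $\widehat{E_{6(2)}}^d$ identified in Theorem \ref{thm-EII} and on \texttt{F4\_s} of \cite{DDY}, which would also suggest the right structural formulation of the general argument.
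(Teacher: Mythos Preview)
The statement you are attempting to prove is presented in the paper as an open \emph{conjecture}; the paper offers no proof, only computational evidence from the ten starred entries in Section~\ref{sec-appendix} and the single \texttt{F4\_s} example of \cite{DDY}. So there is no ``paper's own proof'' to compare your proposal against, and your outline should be judged on its own merits.

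Your reduction in Step~2 is sound and matches how the paper organizes $\widehat{G}^d$: Remark~\ref{rmk-thm-DI-SUpq-Aqlambda}(b) and Proposition~4.1 of \cite{DW21-1} do transport both hypothesis and conclusion along good-range cohomological induction, so the problem does localize to fully supported scattered members for each fixed $G$. The genuine gap is Step~3. The condition you extract from the hypothesis---the existence of $\sigma\in W_\Lambda$ with $\det(\sigma)=-1$---is a property of the infinitesimal character $\Lambda$ alone, whereas ${\rm DI}(\pi)$ depends on the full $K$-type structure of $\pi$. Many inequivalent representations share the same $\Lambda$ and have different Dirac cohomologies, so nothing about $\sigma$ by itself forces the remaining $\widetilde K$-types of $H_D(\pi)$ to pair off. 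You recognize this when you write that one must show $W_\pi$ is stable under right multiplication by $\sigma$ with matching multiplicities---but that statement \emph{is} the dichotomy the conjecture asserts, and your outline supplies no mechanism to prove it. The character-theoretic sketch is also not correct as written: $\sigma\in W(\frg,\frt_f)\setminus W(\frk,\frt_f)$ does not act on $K$-characters, so ``$\chi_\pi\cdot(\chi_{S^+}-\chi_{S^-})$ is $\sigma$-antisymmetric'' has no obvious meaning, and even the numerator $\Theta_\pi\cdot D_\frg$ on $T_f$ carries coefficients that depend on $\pi$, not just on $\Lambda$. In short, your plan reduces the conjecture to its own essential difficulty rather than resolving it; what remains is exactly the open problem the paper poses.
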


The above conjecture asserts that there should be \emph{dichotomy} among the spin LKTs whenever cancellation happens.
By the way, our current calculations and those in \cite{DDH,DDY} lead us to make Conjecture \ref{conj-unitarity-from-G-to-L-HP}, which asserts that the reverse direction of an old theorem of Vogan (namely, item (iv) of Theorem \ref{thm-Vogan-coho-ind}) should hold under certain  additional restrictions.

The paper is organized as follows: necessary preliminaries will be collected in Section 2, the root system $\Delta(\frg, \frt_f)$ will be recalled in Section 3. We deduce Theorem \ref{thm-main} in Section 4, and give a formula counting the strings in $\widehat{G}^d$ in Section \ref{sec-number-strings}. Theorem \ref{thm-EII} will be proven in Section 6. The cancellation phenomenon will be studied in Section \ref{sec-can-EII}.
The special unipotent representations of $E_{6(2)}$ with non-vanishing Dirac cohomlogy will be discussed in Section \ref{sec-EII-su}. All the fully supported scattered members of $\widehat{E_{6(2)}}^d$ will be presented in Section \ref{sec-appendix} according to their infinitesimal characters.

\section{Preliminaries}\label{sec-pre}

We continue with the notation in the introduction, and collect necessary preliminaries in this section. Note  that $G$ must be linear.

\subsection{Dirac cohomology of cohomologically induced modules}
We fix a nondegenerate invariant symmetric bilinear form $B$ on $\frg_0$, which is positive definite on $\frp_0$ and negative definite on $\frk_0$. Its extensions/restrictions to $\frg$, $\frk_0$, $\frp_0$, etc., will also be denoted by the same symbol.

Fix
an orthonormal basis $Z_1, \dots, Z_n$ of $\frp_0$ with respect to the inner product induced by
$B$. Let $U(\frg)$ be the
universal enveloping algebra of $\frg$ and let $C(\frp)$ be the
Clifford algebra of $\frp$ (with respect to $B$). The Dirac operator
$D\in U(\frg)\otimes C(\frp)$ is defined by Parthasarathy \cite{Pa} as
$$D=\sum_{i=1}^{n}\, Z_i \otimes Z_i.$$
It is easy to check that $D$ does not depend on the choice of the
orthonormal basis $\{Z_i\}_{i=1}^n$ and it is $K$-invariant for the diagonal
action of $K$ given by adjoint actions on both factors.

Let $\widetilde{K}$ be pin covering group of $K$. That is, $\widetilde{K}$ is the subgroup of $K\times \text{Pin}(\frp_0)$ consisting of all pairs $(k, s)$ such that $\text{Ad}(k)=p(s)$, where $\text{Ad}: K\rightarrow \text{O}(\frp_0)$ is the adjoint action, and $p: \text{Pin}(\frp_0)\rightarrow \text{O}(\frp_0)$ is the pin double covering map.
If $X$ is a
($\frg$, $K$)-module, and if ${\rm Spin}_G$ denotes a spin module for
$C(\frp)$, then $U(\frg)\otimes C(\frp)$ acts on $X\otimes {\rm Spin}_G$ in
the obvious fashion, while $\widetilde{K}$ acts  on $X$
through $K$ and on ${\rm Spin}_G$ through the pin group
$\text{Pin}(\frp_0)$. Now the Dirac operator acts on $X\otimes {\rm Spin}_G$, and the Dirac
cohomology of $X$ is defined
as the $\widetilde{K}$-module
\begin{equation}\label{def-Dirac-cohomology}
H_D(X)=\text{Ker}\, D/ (\text{Im} \, D \cap \text{Ker} D).
\end{equation}
We embed $\frt_f^{*}$ as a subspace of $\frh_f^{*}$ by setting  the linear functionals on $\frt_f$ to be zero on $\fra_f$.
The following result slightly extends Theorem 2.3 of Huang and Pand\v zi\'c \cite{HP} to disconnected groups.

\begin{thm}\label{thm-HP} \emph{(Theorem A of \cite{DH})}
Let $G$ be a real
reductive Lie group  in Harish-Chandra class.
Let $X$ be an irreducible ($\frg$, $K$)-module with infinitesimal character $\Lambda$.
Suppose that  $\widetilde{\delta}$ is an irreducible $\widetilde{K}$-module in the Dirac cohomology $H_D(X)$
with a highest weight $\mu$. Then $\Lambda$ is conjugate to $\mu+\rho_K$
under the action of the Weyl group $W(\frg,\frh_f)$.
\end{thm}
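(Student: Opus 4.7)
The plan is to reduce the statement to the connected case, which is the original Huang--Pand\v zi\'c theorem (Theorem 2.3 of \cite{HP}). Let $G_0$ denote the identity component of $G$, and set $K_0=K\cap G_0$, the identity component of $K$. Because $G$ lies in the Harish-Chandra class, both $G/G_0$ and $K/K_0$ are finite. Since the fundamental Cartan $T_f$ is a torus, it is connected and therefore contained in $K_0$, so $\frt_f$ and $\frh_f$ serve simultaneously as Cartan subalgebras for the connected groups $K_0\subset G_0$ and for $K\subset G$. In particular, the Weyl group $W(\frg,\frh_f)$ and the element $\rho_K$ coincide in the two settings.

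First I would restrict $X$ to $(\frg,K_0)$. Admissibility of $X$ as a $(\frg,K)$-module, together with the finiteness of $K/K_0$, yields a finite decomposition
$$
X\big|_{(\frg,K_0)}=\bigoplus_{i=1}^{r} X_i
$$
into irreducible $(\frg,K_0)$-modules. Because the center $Z(U(\frg))$ still acts on each $X_i$ through the same character as on $X$, every $X_i$ has infinitesimal character $\Lambda$. Next I would exploit that the Dirac operator $D=\sum Z_i\otimes Z_i$ is defined purely from the Lie-algebra data $(\frg,\frp,B)$; hence it commutes with the decomposition above, giving an isomorphism of $\wt{K_0}$-modules
$$
H_D(X)=\bigoplus_{i=1}^{r} H_D(X_i).
$$

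The next step is to transfer the $\wt{K}$-type $\wt\delta$ of highest weight $\mu$ down to a $\wt{K_0}$-type in some $H_D(X_i)$. Since $\wt{T_f}$ is a maximal torus in $\wt{K_0}$ as well, a highest-weight vector $v\in\wt\delta$ of weight $\mu$ with respect to $\Delta^+(\frk,\frt_f)$ generates under $\wt{K_0}$ an irreducible $\wt{K_0}$-submodule $\wt{\delta_0}$ again of highest weight $\mu$. As $\wt\delta$ occurs in $H_D(X)$, the constituent $\wt{\delta_0}$ occurs in $H_D(X)$ too, hence in $H_D(X_i)$ for at least one index $i$. Applying the connected-group Huang--Pand\v zi\'c theorem to the irreducible $(\frg,K_0)$-module $X_i$ of infinitesimal character $\Lambda$ and to the $\wt{K_0}$-type $\wt{\delta_0}$ of highest weight $\mu$ yields that $\Lambda$ is $W(\frg,\frh_f)$-conjugate to $\mu+\rho_K$, which is the conclusion sought.

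The main obstacle is the bookkeeping at step one: making sure that the decomposition of $X|_{(\frg,K_0)}$ is indeed finite and respects the infinitesimal character, and then checking that the highest-weight datum transfers faithfully between the double covers $\wt K$ and $\wt{K_0}$. The former hinges on admissibility plus $[K:K_0]<\infty$, the latter on $\wt{T_f}\subset\wt{K_0}$ being a maximal torus of both covers so that a common choice of positive roots $\Delta^+(\frk,\frt_f)$ and the same half-sum $\rho_K$ may be used throughout. Once these compatibilities are verified, the disconnected case follows without any further input.
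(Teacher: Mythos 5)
The paper gives no proof of this statement: it is quoted as Theorem A of \cite{DH}, and the argument there is the direct algebraic one. Vogan's conjecture (as proved in \cite{HP}) supplies, for each $z$ in the center of $U(\frg)$, an identity $z\otimes 1=\eta(z)+Da+aD$ in $U(\frg)\otimes C(\frp)$, where $\eta(z)$ lies in the center of the diagonal copy of $U(\frk)$; acting on a highest weight vector of $\widetilde\delta$ inside $H_D(X)$ and comparing Harish-Chandra homomorphisms forces $\Lambda$ to be conjugate to $\mu+\rho_K$. Since this identity lives entirely at the level of $(\frg,\frk,\frp)$, it is insensitive to disconnectedness, which is why \cite{DH} can extend \cite{HP} to the Harish-Chandra class essentially for free. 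Your reduction to the connected case is a genuinely different, softer route, and it does work, but two steps need more care than you give them. First, admissibility alone does not yield semisimplicity of $X|_{(\frg,K_0)}$: one needs the Clifford-theory argument. Admissibility together with finite generation give finite length over $(\frg,K_0)$, hence an irreducible $(\frg,K_0)$-submodule $Y$; since $K_0$ is normal in $K$ and each $\mathrm{Ad}(k)$ preserves the $\frg$-action, the finitely many $K$-translates of $Y$ span a nonzero $(\frg,K)$-submodule, which by irreducibility is all of $X$, exhibiting $X$ as a finite direct sum of irreducible $(\frg,K_0)$-modules. Second, the theorem is stated for an arbitrary reductive $G$ in the Harish-Chandra class, so $G_0$ is connected reductive but need not be semisimple with finite center as in the hypotheses of Theorem 2.3 of \cite{HP}; you must invoke the standard reductive extension of that theorem (or restrict attention to the setting of this paper, where $G_0$ is semisimple with finite center). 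With those repairs the argument is complete: the transfer of the highest-weight datum is unproblematic because $T_f$, $\Delta^+(\frk,\frt_f)$, $\rho_K$ and $W(\frg,\frh_f)$ are common to $G$ and $G_0$, and a highest weight vector of weight $\mu$ in $\widetilde\delta$ generates an irreducible $\frk$-constituent of highest weight $\mu$ inside some $H_D(X_i)$. What the direct argument buys is brevity and uniformity; what your reduction buys is that the connected theorem can be used as a genuine black box.
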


A formula for the Dirac cohomology of cohomologically induced modules in the weakly good range has been given in \cite{DH}. See also \cite{Pan}. To state it, assume that the inducing $(\frl, L\cap K)$-module $Z$
has infinitesimal character $\lambda_L\in i\frt_{f,0}^*$ which is dominant for $\Delta^+(\frl\cap \frk, \frt_f)$. We say that $Z$ is \emph{weakly good} if
\begin{equation}
\langle \lambda_L + \rho(\fru), \alpha^{\vee} \rangle \geq 0, \quad \forall \alpha\in \Delta(\fru, \frt_f).
\end{equation}

\begin{thm}\label{thm-Vogan-coho-ind}
{\rm (\cite{Vog84} Theorems 1.2 and 1.3, or \cite{KV} Theorems 0.50 and 0.51)}
Suppose the admissible
 ($\frl$, $L\cap K$)-module $Z$ is weakly good.  Then we have
\begin{itemize}
\item[(i)] $\caL_j(Z)=\caR^j(Z)=0$ for $j\neq S$.
\item[(ii)] $\caL_S(Z)\cong\caR^S(Z)$ as ($\frg$, $K$)-modules.
\item[(iii)]  if $Z$ is irreducible, then $\caL_S(Z)$ is either zero or an
irreducible ($\frg$, $K$)-module with infinitesimal character $\lambda_L+\rho(\fru)$.
\item[(iv)]
if $Z$ is unitary, then $\caL_S(Z)$, if nonzero, is a unitary ($\frg$, $K$)-module.
\item[(v)] if $Z$ is in good range, then $\caL_S(Z)$ is nonzero, and it is unitary if and only if $Z$ is unitary.
\end{itemize}
\end{thm}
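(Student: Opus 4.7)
The plan is to deduce the theorem from the explicit formula for $H_D(\caL_S(Z))$ given by Theorem B of \cite{DH}, which is valid throughout the weakly good (and hence good) range. That formula expresses each $\widetilde{K}$-type of $H_D(\caL_S(Z))$ as a shift, by $\rho(\fru\cap\frp)$, of a $\widetilde{K_L}$-type appearing in $H_D(Z)$. Once this input is in hand, the iff reduces to checking, in both directions, that the shift map between sets of occurring types is well-defined and has the correct image.

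For the ``if'' direction I would pick any $\widetilde{K_L}$-type $\gamma_L$ of $H_D(Z)$ and show that $\gamma_L+\rho(\fru\cap\frp)$ contributes a nonzero $\widetilde{K}$-type to $H_D(\caL_S(Z))$. Two things have to be verified. First, $\caL_S(Z)$ itself must be nonzero with infinitesimal character $\lambda_L+\rho(\fru)$, which follows from Theorem \ref{thm-Vogan-coho-ind}(iii)--(v) thanks to the good range hypothesis. Second, $\gamma_L+\rho(\fru\cap\frp)$ must be a legitimate $\Delta^+(\frk,\frt_f)$-dominant highest weight whose sum with $\rho_K$ is $W(\frg,\frh_f)$-conjugate to $\lambda_L+\rho(\fru)$, as demanded by Theorem \ref{thm-HP}; here the HP hypothesis on $\lambda_L+\rho(\fru)$ is used essentially, since without it Remark \ref{rmk-thm-DI-SUpq-Aqlambda}(a) would force $H_D(\caL_S(Z))=0$ outright and the iff would be vacuous. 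Note also that the good range cannot be relaxed to weakly good, by Example \ref{exam-weakly-good}. For the ``only if'' direction, I would take an arbitrary $\widetilde{K}$-type in $H_D(\caL_S(Z))$ and run the formula of \cite{DH} in reverse: its highest weight is forced to be of the form $\gamma_L+\rho(\fru\cap\frp)$ for some $\widetilde{K_L}$-type $\gamma_L$ genuinely appearing in $H_D(Z)$, so $H_D(Z)$ cannot vanish.

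The principal obstacle I anticipate is the disconnected case. When $G$ is not connected, $L$ need not be either, so $\widetilde{K}$- and $\widetilde{K_L}$-types are not cleanly parametrized by highest weights on $\frt_f$: the component group acts and one has to track this action through the pin double covers. My plan is to first prove the connected-group version of the correspondence $\gamma_L\mapsto \gamma_L+\rho(\fru\cap\frp)$, extending the argument of Theorem 6.1 of \cite{D13}, and then to promote the result to $K$ and $L\cap K$ by a Clifford/Mackey restriction along the finite component quotient, using that $L\cap K$ meets every component of $K$. A secondary point to check is compatibility of the formula of \cite{DH} with the index $j\in\{0,\dots,s-1\}$ labelling the $s$ choices of $\Delta^+(\frg,\frt_f)$ extending $\Delta^+(\frk,\frt_f)$ as in \eqref{s}: one must verify that the shift by $\rho(\fru\cap\frp)$ respects this labelling when passing from $(\frg,\frt_f)$ to $(\frl,\frt_f)$, so that the HP condition for $\lambda_L+\rho(\fru)$ indeed matches up, on the nose, with an HP-type condition encoded in the $\widetilde{K_L}$-types of $H_D(Z)$.
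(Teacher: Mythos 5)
Your proposal does not prove the statement in question. Theorem \ref{thm-Vogan-coho-ind} is the package of foundational facts about cohomological induction in the weakly good range --- vanishing of $\caL_j(Z)$ and $\caR^j(Z)$ outside the middle degree $S$, the isomorphism $\caL_S(Z)\cong\caR^S(Z)$, irreducibility, preservation of unitarity, and non-vanishing in the good range. These are results of Vogan (\cite{Vog84}, Theorems 1.2 and 1.3; see also \cite{KV}, Theorems 0.50 and 0.51) whose proofs rest on the structure theory of the Zuckerman functors, bottom-layer $K$-types, and the signature-character argument for unitarizability; the paper itself offers no proof and simply cites them. What you have written is instead an outline of the proof of Theorem \ref{thm-main}, the non-vanishing criterion for Dirac cohomology: you reduce an ``iff'' about $H_D(\caL_S(Z))$ versus $H_D(Z)$ to the shift map $\gamma_L\mapsto\gamma_L+\rho(\fru\cap\frp)$ coming from Theorem \ref{thm-DH}. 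None of the five items (i)--(v) is addressed by this.

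Worse, the argument is circular as a proof of the stated theorem: in your ``if'' direction you explicitly invoke ``Theorem \ref{thm-Vogan-coho-ind}(iii)--(v)'' to guarantee that $\caL_S(Z)$ is nonzero with infinitesimal character $\lambda_L+\rho(\fru)$, i.e.\ you assume the very statement you are asked to prove. A genuine proof of (i)--(v) cannot go through Dirac cohomology at all (Dirac cohomology is not even defined for the general admissible $Z$ appearing in (i)--(ii), and unitarity in (iv) is not detected by non-vanishing of $H_D$); it must follow Vogan's original route via the Hermitian dual pairing on $\caL_S(Z)$ induced from the one on $Z$, the Euler characteristic and vanishing arguments for the derived functors, and the positivity of the Shapovalov-type form in the good range. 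If, on the other hand, your intended target was Theorem \ref{thm-main}, then your outline does follow the paper's strategy, but you would still need to supply the actual dominance and integrality verifications (via Lemma \ref{lemma-linear} and the decomposition $W(\frk,\frt_f)\times W(\frg,\frt_f)^1\to W(\frg,\frt_f)$) that the paper carries out to show $\gamma_L+\rho(\fru\cap\frp)+\rho_K$ is dominant integral regular for $\Delta^+(\frk,\frt_f)$.
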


It is worth noting that the reverse direction of Theorem \ref{thm-Vogan-coho-ind}(iv) is not true in general. However, we suspect that it may hold in certain special cases. See Conjecture \ref{conj-unitarity-from-G-to-L-HP}.

Now we are able to state the aforementioned formula.

\begin{thm}{\rm (Theorem B of \cite{DH})}\label{thm-DH}
 Suppose that the irreducible unitary $(\frl, L\cap K)$-module $Z$ has infinitesimal character $\lambda_L\in i\frt_{f, 0}^*$ which is weakly good. Then there is a $\widetilde{K}$-module isomorphism
\begin{equation}\label{Dirac-coho}
H_D(\caL_S(Z)) \cong  \caL_S^{\widetilde{K}} (H_D(Z)\otimes \bbC_{-\rho(\fru\cap\frp)}).
\end{equation}
\end{thm}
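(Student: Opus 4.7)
The plan is to compare the Dirac cohomology of $\caL_S(Z)$ directly, via Parthasarathy's operator $D_G$ on $\caL_S(Z)\otimes \mathrm{Spin}_G$, with a cohomological induction of the Dirac cohomology of $Z$, by exploiting a compatible factorization of both ingredients along $\frp=(\frl\cap\frp)\oplus(\fru\cap\frp)\oplus(\bar\fru\cap\frp)$. As an $\widetilde{L\cap K}$-module, the polarization $(\fru\cap\frp)\oplus(\bar\fru\cap\frp)$ of the complement of $\frl\cap\frp$ in $\frp$ gives a tensor factorization $\mathrm{Spin}_G\cong \mathrm{Spin}_L\otimes S$, where $S\cong \wedge^{\bullet}(\fru\cap\frp)\otimes \bbC_{-\rho(\fru\cap\frp)}$ is the spin module of $C((\fru\cap\frp)\oplus(\bar\fru\cap\frp))$; the character $\bbC_{-\rho(\fru\cap\frp)}$ is the standard shift that centers the $\frt_f$-weights of $\wedge^{\bullet}(\fru\cap\frp)$ symmetrically about $0$, and it is precisely this shift that surfaces on the right of (\ref{Dirac-coho}).

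Correspondingly I would split $D_G=1\otimes D_L + D^{+} + D^{-}$, where $D_L$ is the Parthasarathy operator for $\frl$ acting on the $\mathrm{Spin}_L$ factor, and $D^{\pm}$ are Koszul-type operators built from dual bases of $\fru\cap\frp$ and $\bar\fru\cap\frp$ acting as creation/annihilation operators on $S$. The graded-commutation relations turn $\caL_S(Z)\otimes \mathrm{Spin}_G$ into a double complex, yielding a two-step spectral sequence that abuts to $H_D(\caL_S(Z))$. One then identifies $D^{\pm}$-cohomology on admissible $(\frg,K)$-modules with the derived Zuckerman functors: under the weakly good hypothesis, Theorem \ref{thm-Vogan-coho-ind}(i)--(iii) forces concentration of this cohomology at the middle degree $S$ and fixes the infinitesimal character of $\caL_S(Z)$, so the spectral sequence collapses and the relevant $E_2$-page is already $\caL_S^{\widetilde{K}}$ applied to $Z\otimes \mathrm{Spin}_L\otimes \bbC_{-\rho(\fru\cap\frp)}$. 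Taking the residual $D_L$-cohomology, which operates only on the $\mathrm{Spin}_L$ factor, replaces $Z\otimes \mathrm{Spin}_L$ by $H_D(Z)$, producing (\ref{Dirac-coho}).

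The main obstacle is making the double-complex argument rigorous in the derived category of $(\frg,K)$-modules while tracking $\widetilde{K}$-equivariance end-to-end, and verifying that the two differentials can indeed be separated despite the twist appearing in the spin factorization. The weakly good hypothesis is essential exactly here: Theorem \ref{thm-Vogan-coho-ind}(i) kills the derived functors $\caL_j$ for $j\neq S$, and Theorem \ref{thm-Vogan-coho-ind}(iii) pins down the infinitesimal character so that no hidden higher differentials can contribute. As an alternative, should the spectral-sequence bookkeeping become intractable, one can first match $K$-multiplicities on both sides of (\ref{Dirac-coho}) using Vogan's bottom-layer theorem for the $K$-types of $\caL_S(Z)$, and then upgrade to a $\widetilde{K}$-module isomorphism by identifying highest weights on the left via Theorem \ref{thm-HP} applied to the irreducible module $\caL_S(Z)$, whose infinitesimal character $\lambda_L+\rho(\fru)$ is explicitly known.
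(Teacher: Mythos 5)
First, a structural point: the paper does not prove this statement at all --- it is quoted as Theorem B of \cite{DH}, so the proof to measure your proposal against is the one in that reference. That proof follows the route you mention only as a fallback in your last sentences: Parthasarathy's Dirac inequality for the unitary module $\caL_S(Z)$ together with Theorem \ref{thm-HP} isolates exactly which $\widetilde{K}$-types can survive in $H_D(\caL_S(Z))$ (namely those $E_\gamma$ in $\caL_S(Z)\otimes{\rm Spin}_G$ with $\|\gamma+\rho_K\|=\|\lambda_L+\rho(\fru)\|$), and Vogan's bottom-layer theory shows that these all arise, with the correct multiplicities, from the $\widetilde{L\cap K}$-types of $H_D(Z)$ via the shift by $\rho(\fru\cap\frp)$; this is precisely the content of $\caL_S^{\widetilde{K}}(H_D(Z)\otimes\bbC_{-\rho(\fru\cap\frp)})$. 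Note that this argument uses unitarity of both $Z$ and $\caL_S(Z)$ in an essential way (so that $H_D=\ker D$ and is detected by spin norms), something your primary formulation never invokes.

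Your primary route has a genuine gap at its central step. The Koszul-type operators $D^{\pm}$ built from dual bases of $\fru\cap\frp$ and $\bar{\fru}\cap\frp$ act on the factor $\wedge^{\bullet}(\fru\cap\frp)$, and what their cohomology computes --- after a nontrivial Casselman--Osborne/Kostant-type analysis in the spirit of Huang--Pand\v zi\'c--Renard --- is nilpotent Lie algebra (co)homology of the module, graded in degrees $0,\dots,\dim(\fru\cap\frp)$. The functor $\caL_S$, by contrast, is the $S$-th derived Bernstein functor with $S=\dim(\fru\cap\frk)$, computed by relative $(\frk,L\cap K)$-(co)homology; the degrees do not even match, and there is no identification of ``$D^{\pm}$-cohomology'' with derived Zuckerman functors to appeal to. Moreover, Theorem \ref{thm-Vogan-coho-ind}(i) is a vanishing statement about $\caL_j(Z)$, not about any complex built on $\caL_S(Z)\otimes{\rm Spin}_G$, so it cannot be used to collapse your spectral sequence. (A secondary issue: the splitting $D_G=1\otimes D_L+D^{+}+D^{-}$ is not exact as written; the cross terms involve Kostant's cubic correction for the pair $(\frg,\frl)$, and $D^{\pm}$ interact with $1\otimes D_L$ only up to terms that must be controlled before any double-complex structure exists.) To repair this route you would have to first prove the comparison between Dirac cohomology and $\fru$-cohomology and then still relate the $\fru$-cohomology of $\caL_S(Z)$ back to $Z$ --- at which point you have rebuilt the bottom-layer argument. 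I would therefore promote your ``alternative'' to the main proof.
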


In the setting of the above theorem, it is clear that when $H_D(\caL_S(Z))$ is non-zero, then $H_D(Z)$ must be non-zero. However, the other direction is unclear yet. Theorem \ref{thm-main} aims to fill this gap for linear groups.

\subsection{A very brief introduction of the software \texttt{atlas}}

Let $H(\bbC)$ be a \emph{maximal torus} of $G(\bbC)$.   Its \emph{character lattice} is the group of algebraic homomorphisms
$$
X^*:={\rm Hom}_{\rm alg} (H(\bbC), \bbC^{\times}).
$$
Choose a Borel subgroup $B(\bbC)\supset H(\bbC)$.
In the software \texttt{atlas} \cite{ALTV, At}, an irreducible $(\frg, K)$-module $\pi$ is parameterized by a \emph{final} parameter $p=(x, \lambda, \nu)$ via the Langlands classification. See Theorem 6.1 of \cite{ALTV}. Here $x$ is a $K(\bbC)$-orbit of the Borel variety $G(\bbC)/B(\bbC)$, $\lambda \in X^*+\rho$ and $\nu \in (X^*)^{-\theta}\otimes_{\bbZ}\bbC$. In such a case, the infinitesimal character of $\pi$ is
\begin{equation}\label{inf-char}
\frac{1}{2}(1+\theta)\lambda +\nu \in\frh^*,
\end{equation}
where $\frh$ is the Lie algebra of $H(\bbC)$.
Note that the Cartan involution $\theta$ now becomes $\theta_x$---the involution of $x$, which is given by the command \texttt{involution(x)} in \texttt{atlas}.

Among the three components of $p=(x, \lambda, \nu)$, the KGB element $x$ is hardest to understand. One can use the command \texttt{print\_KGB(G)} to see the rich information, and to identify which $K(\bbC)$-orbit of the Borel variety it is. For our study, the most relevant knowledge is the following special case of Theorem \ref{thm-Vogan-coho-ind}, rephrased in the language of \texttt{atlas} by Paul \cite{Paul}.

\begin{thm}\label{thm-Vogan-Paul-atlas}
 Let $p=(x, \lambda, \nu)$ be the \texttt{atlas} parameter of an irreducible $(\frg, K)$-module $\pi$.
Let $S(x)$ be the support of $x$, and $\frq(x)$ be the $\theta$-stable parabolic subalgebra given by the pair $(S(x), x)$, with Levi factor $L(x)$.
Then  $\pi$ is cohomologically induced, in the weakly good range,
from an irreducible $(\frl, L\cap K)$-module $\pi_{L(x)}$ with parameter $p_{L(x)}=(y, \lambda-\rho(\fru), \nu)$, where $y$ is the KGB element of $L(x)$ corresponding to the KGB element $x$ of $G$.
\end{thm}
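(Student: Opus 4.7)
The strategy is to view this statement as a dictionary translation. The underlying fact, that every irreducible $(\frg, K)$-module arises in the weakly good range as $\caL_S(Z)$ from the Levi of a canonical $\theta$-stable parabolic attached to it, is essentially the converse side of Theorem \ref{thm-Vogan-coho-ind} (compare Theorem~11.225 of Knapp--Vogan). What actually needs proof is that the parabolic $\frq(x)$ and the inducing atlas parameter $(y,\lambda-\rho(\fru),\nu)$ extracted from $(x,\lambda,\nu)$ by the prescription above do realize this canonical induction, and that the weakly good hypothesis drops out automatically from the atlas finality convention.

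First I would unpack the atlas construction of $S(x)$. The support consists of the simple roots which are compact-imaginary or complex for the involution $\theta_x$ (equivalently, neither real nor non-compact imaginary); these generate a $\theta_x$-stable Levi $\frl(x)\subset\frg$. Choosing a grading element $H\in i\frt_{f,0}$ whose zero eigenspace is precisely $\frl(x)$ then produces $\frq(x)=\frl(x)\oplus\fru(x)$, with $\fru(x)$ the sum of the positive root spaces for roots whose expansion involves a simple root outside $S(x)$. Restricting $\theta_x$ to $\frl(x)$ determines the KGB element $y$ for $L(x)$, so that the atlas embedding $\mathrm{KGB}(L(x))\hookrightarrow\mathrm{KGB}(G)$ sends $y\mapsto x$.

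Next I would match infinitesimal characters. By \eqref{inf-char} the module $\pi_{L(x)}$ with atlas parameter $(y,\lambda-\rho(\fru),\nu)$ has infinitesimal character $\lambda_L=\tfrac{1}{2}(1+\theta_y)(\lambda-\rho(\fru))+\nu$. Since $\theta_y$ agrees with $\theta_x$ on $\frh$ and $\rho(\fru)\in i\frt_{f,0}^*$ is $\theta$-fixed, one obtains
\[
\lambda_L+\rho(\fru)=\tfrac{1}{2}(1+\theta_x)\lambda+\nu,
\]
which is exactly the infinitesimal character of $\pi$. Combined with Theorem \ref{thm-Vogan-coho-ind}(iii), this forces $\caL_S(\pi_{L(x)})\cong\pi$ as soon as the weakly good range and non-vanishing are verified. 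Non-vanishing is then automatic, since $\pi$ itself is given as a non-zero irreducible module whose infinitesimal character has already been matched to the induced one.

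The weakly good inequalities $\langle\lambda_L+\rho(\fru),\alpha^{\vee}\rangle\geq 0$ for $\alpha\in\Delta(\fru,\frt_f)$ reduce, by the previous display, to dominance of $\tfrac{1}{2}(1+\theta_x)\lambda+\nu$ on $\Delta(\fru)$. By construction $\fru$ is generated by simple roots outside $S(x)$, and for such roots the atlas finality convention on $(x,\lambda,\nu)$ forces precisely this dominance. The main obstacle I anticipate is the careful bookkeeping here: one must unwind the atlas definitions of a final parameter together with the cross-action and Cayley-transform rules in order to see that finality at $x$ yields exactly ``weakly good for $\frq(x)$'' rather than merely some partial positivity. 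Once this is checked, the theorem is a direct consequence of Theorem \ref{thm-Vogan-coho-ind}(iii) applied to $\pi_{L(x)}$.
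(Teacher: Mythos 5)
A point of reference first: the paper does not prove this statement at all; it is quoted as Paul's \texttt{atlas} rephrasing of Vogan's induction theorems \cite{Paul}, so there is no in-paper argument to compare yours against. Judged on its own terms, your proposal has two genuine gaps. The first is your description of the support: $S(x)$ is \emph{not} the set of simple roots that are compact imaginary or complex for $\theta_x$. In the paper's own Example \ref{exam-atlas-basic}, \texttt{support(x21)} is a single simple root whose Levi factor is $SL(2,\bbR)$, so that root is real (or noncompact imaginary) at the relevant KGB elements, contradicting your characterization; likewise the open orbit of a split group is fully supported even though every simple root there is real. The support is defined through the generation of $x$ by cross actions and Cayley transforms from the fundamental fiber, and the fact that the standard parabolic attached to $(S(x),x)$ is actually $\theta_x$-stable --- false for an arbitrary pair $(S,x)$, as the \texttt{false} outputs in Example \ref{EII-su-Aq} show --- is itself something that must be proved.

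The second and more serious gap is the identification step. Matching infinitesimal characters, $\lambda_L+\rho(\fru)=\tfrac12(1+\theta_x)\lambda+\nu$, combined with Theorem \ref{thm-Vogan-coho-ind}(iii), only tells you that $\caL_S(\pi_{L(x)})$ is zero or irreducible \emph{with the same infinitesimal character as} $\pi$; there are in general many irreducible $(\frg,K)$-modules with a given infinitesimal character, so this does not force $\caL_S(\pi_{L(x)})\cong\pi$. Your non-vanishing argument is circular for the same reason: the existence of $\pi$ says nothing about $\caL_S(\pi_{L(x)})$ until the isomorphism is in hand. The real content of the theorem is induction by stages at the level of standard modules --- the standard module of $(x,\lambda,\nu)$ is $\caR^S$ of the standard module of $(y,\lambda-\rho(\fru),\nu)$, compatibly with lowest $K$-types and Langlands quotients --- after which weak goodness transfers the statement to irreducible subquotients. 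By contrast, the weakly good inequalities, which you single out as the main obstacle, are essentially immediate once the infinitesimal characters are matched: the \texttt{atlas} normalization makes $\tfrac12(1+\theta_x)\lambda+\nu$ dominant and $\Delta(\fru)\subseteq\Delta^+(\frg,\frh_f)$, so $\langle\lambda_L+\rho(\fru),\alpha^{\vee}\rangle\geq 0$ on $\Delta(\fru)$ comes for free.
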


Let $l$ be the rank of $G$. \texttt{atlas} labels the simple roots of $\Delta^+(\frg, \frh_f)$ as $\{0, 1, 2, \dots, l-1\}$, and the support of an KGB element $x$ is given by the command \texttt{support(x)}. We say that $x$ is \emph{fully supported} if \texttt{support(x)} equals to $[0, 1, 2, \dots, l-1]$.
Whenever $x$ is fully supported, we will have that $\frq(x)=\frg$. We say that the representation $p$ is \emph{fully supported} if its KGB element $x$ is so.

Let us illustrate Theorem \ref{thm-Vogan-Paul-atlas} and some basic \texttt{atlas} commands via a specific example. Some outputs will be omitted to save space.

\begin{example}\label{exam-atlas-basic}
Firstly, let us input the linear split real form of $F_4$ into $\texttt{atlas}$.
\begin{verbatim}
G:F4_s
Value: connected split real group with Lie algebra 'f4(R)'
#KGB(G)
Value: 229
support(KGB(G,228))
Value: [0,1,2,3]
\end{verbatim}
This group has 229 KGB elements in total, and the last one, i.e., \texttt{KGB(G, 228)}, is fully supported.
Indeed, as the following shows, there are $141$ fully supported KGB elements in total.
\begin{verbatim}
set FS=## for x in KGB(G) do if #support(x)=4 then [x] else [] fi od
#FS
Value: 141
\end{verbatim}
Now let us look at a KGB element which is not fully supported.
\begin{verbatim}
set x21=KGB(G,21)
support(x21)
Value: [1]
set q21=Parabolic:(support(x21),x21)
is_parabolic_theta_stable(q21)
Value: true
set L21=Levi(q21)
L21
Value: connected quasisplit real group with Lie algebra 'sl(2,R).u(1).u(1).u(1)'
\end{verbatim}

The $\theta$-stable parabolic subalgebra \texttt{q21} is the one defined by the KGB element \texttt{x21}, whose support consists of the \emph{second} simple root. Since \texttt{atlas} labels the simple roots of $\Delta^+(\frg, \frh_f)$ opposite to that of Knapp \cite{Kn}, the second one is the $\alpha_3$ in Figure~\ref{Fig-FI-Vogan}. Therefore, the Levi subgroup \texttt{L21} of \texttt{q21} has a simple factor $SL(2, \bbR)$.

Now let us set up an irreducible unitary representation and illustrate Theorem \ref{thm-Vogan-Paul-atlas}.

\begin{verbatim}
set p=parameter(KGB(G,21),[0,1,0,1],[-1/2,1,-1,0])
is_unitary(p)
Value: true
infinitesimal_character(p)
Value: [ 0, 1, 0, 1 ]/1
set (Q,q)=reduce_good_range(p)
q
Value: final parameter(x=2,lambda=[-3,2,-4,0]/2,nu=[-1,2,-2,0]/2)
rho_u(Q)
Value: [ 3, 0, 4, 2 ]/2
lambda(p)-rho_u(Q)=lambda(q)
Value: true
Q=q21
Value: true
\end{verbatim}
The last output says that the inducing module \texttt{q} does come from the $\theta$-stable parabolic subalgebra \texttt{q21}. To further identify \texttt{q}, let us start with the trivial representation of \texttt{L21}.
\begin{verbatim}
set t=trivial(L21)
t
Value: final parameter(x=2,lambda=[-1,2,-2,0]/2,nu=[-1,2,-2,0]/2)
goodness(t,G)
Value: "Good"
\end{verbatim}
Now let us move down certain digits of \texttt{t} outside the support of \texttt{L21} to get \texttt{q}.
\begin{verbatim}
set tm=parameter(x(t),lambda(t)-[1,0,1,0],nu(t))
tm=q
Value: true
goodness(q,G)
Value: "Weakly good"
\end{verbatim}
Therefore, the inducing module \texttt{q} is a weakly good unitary character of \texttt{L21}.
\begin{verbatim}
theta_induce_irreducible(q,G)
Value:
1*parameter(x=21,lambda=[0,1,0,1]/1,nu=[-1,2,-2,0]/2) [57]
\end{verbatim}
Thus doing cohomological induction from \texttt{q} recovers the original representation \texttt{p}.
\hfill\qed
\end{example}

Since development of the software \texttt{atlas} is still quickly ongoing, we sincerely recommend the very helpful weakly seminar \cite{AtV} to the reader.

\subsection{Scattered representations versus fully supported scattered representations}\label{sec-scattered-FS-scattered}

As defined in \cite{D20},  a \emph{scattered representation} is a member $\pi$ of $\widehat{G}^d$ which can \emph{not} be cohomologically induced from a member $\pi_L\in\widehat{L}^d$ from the \emph{good range}, where $L$ is the Levi subgroup of a proper $\theta$-stable parabolic subalgebra $\frq$ of $\frg$.

We refer to the fully supported members of $\widehat{G}^d$ as \emph{fully supported scattered representations} of $G$. Note that fully supported scattered representations must be scattered representations. Indeed, let $\pi$ be an arbitrary member of $\widehat{G}^d$ such that the KGB element $x$ of its \texttt{atlas} parameter $(x, \lambda, \nu)$ is fully supported, then $\frq(x)=\frg$. Since $\frq(x)$ is the minimum $\theta$-stable parabolic subalgebra from which $\pi$ can be cohomologically induced from the weakly good range, we conclude that $\pi$ must be scattered.

However, scattered representations need \emph{not} be fully supported. Perhaps the easiest example is the limit of holomorphic discrete series of $SL(2,\bbR)$, whose \texttt{atlas} parameter is as follows:
\begin{verbatim}
 final parameter(x=0,lambda=[0]/1,nu=[0]/1)
\end{verbatim}
It has zero infinitesimal character, and multiplicity-free $K$-types $1, 3, 5, 7, \dots$.
To make the story neat, we treat it as the starting point of the first row of Table \ref{table-SL2R}. We treat the limit of anti-holomorphic discrete series of $SL(2,\bbR)$ similarly, and summarize the results by saying that the Dirac series of $SL(2, \bbR)$ consists of $1$ fully supported scattered representations (the third row of Table \ref{table-SL2R}, the trivial representation) and $2$ strings of representations (the first row and the second row of Table \ref{table-SL2R}).

\begin{table}
\caption{Dirac series of $SL(2, \bbR)$, where $a$ runs over $\bbZ_{\geq 0}$}
\centering
\begin{tabular}{r|c|c|c|r}
$\# x$ &   $\lambda=\Lambda$   & $\nu$ &spin LKT=LKT  \\
\hline
$0$ & $[a]$ & $[0]$ & $[a+1]$\\
$1$ & $[a]$ & $[0]$ & $[-a-1]$\\
$2$ & $[1]$ & $[1]$ & $[0]$\\
\end{tabular}
\label{table-SL2R}
\end{table}

In general, let $N_{\rm FS}(G)$ denote the number of fully supported scattered representations of $G$.
For instance, we have $N_{\rm FS}(SL(2, \bbR))=1$.

Supported by the calculations in \cite{DDH,DDY}, we raise the following conjecture asserting that the reverse direction of Theorem \ref{thm-Vogan-coho-ind}(iv) should be true in certain special setting.

\begin{conj}\label{conj-unitarity-from-G-to-L-HP}
Let $\pi$ be any irreducible unitary $(\frg, K)$ module whose infinitesimal character $\Lambda$ meets the HP condition. Then $\pi_{L(x)}$ must be unitary.
\end{conj}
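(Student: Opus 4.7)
The plan is to leverage the rigidity imposed by the HP condition on $\Lambda$ together with the cohomological induction decomposition provided by Theorem \ref{thm-Vogan-Paul-atlas}. Write $\pi\cong \caL_S(\pi_{L(x)})$ in the weakly good range, so that the infinitesimal characters are related by $\Lambda=\lambda_L+\rho(\fru)$. Since $\pi$ is unitary, hence Hermitian, the Knapp--Vogan theory of invariant forms under cohomological induction produces a nondegenerate invariant Hermitian form on $\pi_{L(x)}$; the content of the conjecture is that this form is positive definite, not merely Hermitian.

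My proposed approach is by contradiction. Assume that $\pi_{L(x)}$ is non-unitary, so its invariant form has a nontrivial negative eigenspace on some $L\cap K$-type $\mu_L$. By the bottom-layer map and the signature-character formulas for cohomological induction in the weakly good range, this negative contribution propagates to a $K$-type of $\pi$ built on $\mu_L+\rho(\fru\cap\frp)$; since $\pi$ is unitary, any such negative contribution must be cancelled by positive contributions from other $L\cap K$-types. The HP condition is the rigidity mechanism intended to rule this cancellation out: the restrictive equality $\{\delta-\rho_n^{(j)}\}+\rho_K=w\Lambda$ forces certain spin-lowest $K$-types of $\pi$ to be present, and via Theorem \ref{thm-DH} these are traced back to $\widetilde{K_L}$-types appearing in $H_D(\pi_{L(x)})\otimes\bbC_{-\rho(\fru\cap\frp)}$. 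The unitarity of $\pi$ then translates into positivity of the inherited form on these distinguished $\widetilde{K_L}$-types, which I would argue is incompatible with the assumed negativity on $\mu_L$, using the HP condition to preclude the existence of a separate ``invisible'' sector of $\pi_{L(x)}$ that could absorb the negatives.

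The principal obstacle is bridging the gap between \emph{Dirac-detected} positivity and \emph{full} positivity of the Hermitian form on $\pi_{L(x)}$: Dirac cohomology constrains signatures only on a small distinguished set of $\widetilde{K_L}$-types, whereas unitarity is a statement about every $L\cap K$-type. Two routes suggest themselves. Route (a) is a signature-character argument in the spirit of the Hecke-algebra deformation of \cite{ALTV}, propagating positivity along composition factors starting from the Dirac-detected types. Route (b) is a combinatorial reduction showing that the HP condition already forces $\lambda_L$ to be regular enough that $\pi_{L(x)}$ must lie in a class whose unitarity is built in, such as unitary characters or modules of the form $A_{\frq}(\lambda)$. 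The extensive computations in \cite{DDH, DDY} and in the tables of Section \ref{sec-appendix}, where every instance encountered so far conforms to the conjecture, strongly suggest that route (b) is the intended one, with the HP condition effectively bringing us within the scope of Conjecture 5.7' of Salamanca-Riba and Vogan.
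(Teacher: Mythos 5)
The statement you are addressing is stated in the paper as Conjecture \ref{conj-unitarity-from-G-to-L-HP}: the paper offers no proof of it, only computational evidence. Section \ref{sec-EII-esc} verifies it for $E_{6(2)}$ by exhausting, in \texttt{atlas}, all non--fully-supported KGB elements and all relevant infinitesimal characters and checking \texttt{is\_unitary} on each inducing module $p_{L(x)}$; the support from \cite{DDH,DDY} is of the same case-by-case nature. So there is no argument in the paper to compare yours against, and your text must stand on its own as a proof attempt.

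As such it is a plan, not a proof, and the gap you yourself flag as the ``principal obstacle'' --- passing from positivity of the invariant form on a few Dirac-detected $\widetilde{K_L}$-types to positivity on \emph{every} $L\cap K$-type --- is precisely the open content of the conjecture; neither route (a) nor route (b) is carried out. There are also two concrete flaws earlier in the mechanism. First, the HP condition is a purely numerical condition on $\Lambda$: it asserts that $\{\delta-\rho_n^{(j)}\}+\rho_K=w\Lambda$ for \emph{some} $K$-type $\delta$ of $K$, not for a $K$-type occurring in $\pi$, and by Theorem \ref{thm-HP} it is only \emph{necessary} for $H_D(\pi)\neq 0$, not sufficient. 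So your claim that the HP condition ``forces certain spin-lowest $K$-types of $\pi$ to be present'' is unjustified; the distinguished types your rigidity argument grips may simply not exist. Second, your appeal to Theorem \ref{thm-DH} to trace those types back to $H_D(\pi_{L(x)})$ is circular: that theorem assumes the inducing module is unitary, which is exactly the conclusion you are trying to reach. Route (b), reducing to a finite list of sufficiently rigid $\lambda_L$, is closest in spirit to what the paper actually does, but in the paper that reduction terminates in a machine check rather than an argument, and the conjecture remains open.
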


\section{The root system $\Delta(\frg, \frt_f)$}
\label{sec-coroots}

This section should be well-known to the experts. We learn the content from \cite{FV}, which might never be published. A good alternative reference is Steinberg \cite{St}.

We enumerate the simple roots for $\Delta^+(\frg, \frh_f)$ as follows:
\begin{align*}
&\alpha_1, \dots, \alpha_p, \mbox{(compact imaginary)}\\
&\beta_1, \dots, \beta_q, \mbox{(non-compact imaginary)}\\
&\gamma_1, \dots, \gamma_r, \theta(\gamma_1), \dots, \theta(\gamma_r), \mbox{(complex)}.
\end{align*}
Note that each part above may be absent.
We denote the corresponding fundamental weights by
$$
 \varpi(\alpha_1), \dots, \varpi(\alpha_p),  \varpi(\beta_1), \dots, \varpi(\beta_q),
 \varpi(\gamma_1), \dots, \varpi(\gamma_r),
 \varpi(\theta(\gamma_1)), \dots, \varpi(\theta(\gamma_r)).
$$
Let $\rho$ be the half sum of roots in $\Delta^+(\frg, \frh_f)$.
For each root $\alpha\in\Delta^+(\frg, \frh_f)$, we denote by $\overline{\alpha}$ its restriction to $\frt_f$, by $\alpha^\vee$ the coroot of $\alpha$. Note that $\theta(\gamma_j)^\vee=\theta(\gamma_j^\vee)$ for $1\leq j\leq r$.
We can label the simple roots $\gamma_1, \dots, \gamma_r$ so that $\gamma_j$, $\theta(\gamma_j)$, $\gamma_j^\vee$ and $\theta(\gamma_j^\vee)$  generate a subsystem of type $A_1\times A_1$ for $2\leq j\leq r$. However, when $j=1$ the subsystem can be of type $A_2$.

Collecting all these restricted roots $\overline{\alpha}$ for $\alpha\in\Delta(\frg, \frh_f)$, we get the  root system $\Delta(\frg, \frt_f)$ which may not be reduced.
Note that
\begin{equation}
\Delta_{\rm red}(\frg, \frt_f)=\{\overline{\alpha}\mid \overline{\alpha}/2 \notin \Delta(\frg, \frt_f)\}
\end{equation}
is a \emph{reduced} root system.

For any vector $\mu\in\frt_f^*$, we say that $\mu$ \emph{is integral for} $\Delta(\frg, \frt_f)$ if the pairing of $\mu$ with each coroot for $\Delta(\frg, \frt_f)$  is an integer.  Similarly, we say that $\mu$ \emph{is integral for} $\Delta(\frk, \frt_f)$ if the pairing of $\mu$ with each coroot for $\Delta(\frk, \frt_f)$ is an integer. It is obvious that if $\mu$ is integral for $\Delta(\frg, \frt_f)$, then $\mu$ must be integral for $\Delta(\frk, \frt_f)$.

Restricting all the roots of $\Delta^+(\frg, \frh_f)$ to $\frt_f$, we get a positive root system $\Delta^+(\frg, \frt_f)$. Its simple roots are
$$
\alpha_1, \dots, \alpha_p, \beta_1, \dots, \beta_q,
\overline{\gamma_1}, \dots, \overline{\gamma_r}.
$$
Here $\overline{\gamma_j}=\frac{\gamma_j+\theta(\gamma_j)}{2}$ for $1\leq j\leq r$.
Put
\begin{equation}\label{coroots-gt}
\alpha_1^\vee, \dots, \alpha_p^\vee, \beta_1^\vee, \dots, \beta_q^\vee,
\gamma_1^\vee+\theta(\gamma_1^\vee), \dots, \gamma_r^\vee+\theta(\gamma_r^\vee),
\end{equation}
and
\begin{equation}\label{fundwt-gt}
\varpi(\alpha_1), \dots, \varpi(\alpha_p), \varpi(\beta_1), \dots, \varpi(\beta_q), \frac{\varpi(\gamma_1)+\varpi(\theta(\gamma_1))}{2}, \dots, \frac{\varpi(\gamma_r)+\varpi(\theta(\gamma_r))}{2}.
\end{equation}
Note that $\mu$ is integral for $\Delta(\frg, \frt_f)$ if and only if the paring of $\mu$ with each coroot in \eqref{coroots-gt} is an integer, if and only if $\mu$ is an integer combination of \eqref{fundwt-gt}.

\begin{lemma}\label{lemma-linear}
Let $G$ be a simple linear real Lie group in the Harish-Chandra class. Let $\delta$ be the highest weight of any $K$-type. Then $\delta$ is integral for $\Delta(\frg, \frt_f)$.
\end{lemma}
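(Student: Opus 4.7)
The plan is to check integrality of $\delta$ against each of the three families of simple coroots listed in \eqref{coroots-gt}, using linearity of $G$ to control the coroots outside $\Delta(\frk, \frt_f)$. For a compact imaginary simple root $\alpha_i$, the coroot $\alpha_i^\vee$ already lies in the coroot system of $\Delta(\frk, \frt_f)$, and since $\delta$ is a $K$-type highest weight it is automatically integral for $\Delta(\frk, \frt_f)$; hence $\langle \delta, \alpha_i^\vee\rangle \in \bbZ$ is immediate.

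For a non-compact imaginary simple root $\beta_j$, the coroot $\beta_j^\vee$ lies in $\frt_f$ but typically fails to be a coroot of $\Delta(\frk, \frt_f)$. The key observation is that $\beta_j^\vee$, being a coroot of $\Delta(\frg, \frh_f)$, is the differential of an algebraic cocharacter $\bbC^\times \to H(\bbC)$, so $\exp(2\pi i \beta_j^\vee)=e$ in $G(\bbC)$. Linearity of $G$ then forces this identity to hold in $G$, and in particular in $T_f$. Since $\delta$ is the differential of a character $T_f \to U(1)$, and $2\pi i \beta_j^\vee \in \frt_{f,0}$ exponentiates to the identity, $\langle \delta, \beta_j^\vee\rangle$ must lie in $\bbZ$.

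For a complex simple root $\gamma_k$, the vector $\gamma_k^\vee+\theta(\gamma_k^\vee)$ is $\theta$-fixed and hence lies in $\frt_f$. Both summands are coroots of $\Delta(\frg, \frh_f)$ and commute (both lie in the Cartan $\frh_f$), so $\exp(2\pi i(\gamma_k^\vee+\theta(\gamma_k^\vee)))=\exp(2\pi i \gamma_k^\vee)\exp(2\pi i\theta(\gamma_k^\vee))=e$ in $G(\bbC)$, and linearity again transports this identity to $G$. The same character-theoretic reasoning as in the previous paragraph then delivers $\langle \delta,\gamma_k^\vee+\theta(\gamma_k^\vee)\rangle\in\bbZ$, completing the verification for all three families.

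The decisive ingredient, and really the only nontrivial one, is the passage $\exp(2\pi i\alpha^\vee)=e$ from $G(\bbC)$ down to the real group $G$, which rests squarely on the linearity hypothesis. Without it, as in the metaplectic cover of $Sp(2n,\bbR)$ or in $\Spin$ covers, this identity fails by a nontrivial central element, and $K$-type highest weights may acquire half-integer pairings with non-compact imaginary or complex coroots, genuinely breaking the conclusion of the lemma. Everything else in the argument is formal bookkeeping on the Cartan decomposition $\frh_f=\frt_f\oplus\fra_f$ and on commuting exponentials in the complex Cartan.
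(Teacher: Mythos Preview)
Your argument is correct and proceeds along a genuinely different line from the paper's. The paper first reduces to the simply connected cover of $G$, then invokes the description
\[
X^*(T_f)=X^*(H_f)\big/\operatorname{span}\{\lambda-\theta(\lambda)\mid \lambda\in X^*(H_f)\}
\]
from \cite{FV} and observes that, in the simply connected case, the denominator is spanned by $\varpi(\gamma_i)-\varpi(\theta(\gamma_i))$; this exhibits \eqref{fundwt-gt} as a $\bbZ$-basis of $X^*(T_f)$, so any $K$-type highest weight, lying in $X^*(T_f)$, is automatically integral for $\Delta(\frg,\frt_f)$.

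You work on the dual side instead: rather than describing the character lattice of $T_f$, you show directly that each simple coroot in \eqref{coroots-gt} lies in the cocharacter lattice of $T_f$, using that coroots of $G(\bbC)$ exponentiate to the identity in $H(\bbC)$ and that linearity realizes $T_f$ inside $H(\bbC)$. Your route is more self-contained (no appeal to \cite{FV}, no reduction to the simply connected case) and makes the role of linearity completely transparent. The paper's route, on the other hand, yields the sharper statement that \eqref{fundwt-gt} is an honest basis of $X^*(T_f)$ when $G$ is simply connected, which is slightly more information than integrality alone. Your separate treatment of the compact imaginary case is convenient but not essential: the exponential argument you give for $\beta_j^\vee$ applies verbatim to $\alpha_i^\vee$ as well.
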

\begin{proof}
We may and we will assume that $G$ is simply connected.
Let $X^*(H_f)$ be the lattice of rational characters of $H_f$. Define $X^*(T_f)$ similarly. Then as shown in \cite{FV},
$$
X^*(T_f)=X^*(H_f)/{\rm span}\, \{\lambda -\theta(\lambda)\mid \lambda \in X^*(H_f) \}.
$$
Since $G$ is simply-connected, the above denominator is
$$
{\rm span}\, \{\varpi(\gamma_i)-\varpi(\theta(\gamma_i))\mid 1\leq i\leq r\}.
$$
It follows  that \eqref{fundwt-gt} is a basis for $X^*(T_f)$.
\end{proof}

\begin{example}\label{exam-lemma-linear}
Consider the \emph{linear}  split real form $F_4$. This group is equal rank, i.e., $\frh_f=\frt_f$. It is connected but not simply connected. Indeed,
$$
K\cong {\rm Sp}(3)\times {\rm Sp}(1)/\{\pm 1\}.
$$
Let us adopt the Vogan diagram for its Lie algebra as in \cite{Kn}, see Figure \ref{Fig-FI-Vogan}.

\begin{figure}[H]
\centering
\scalebox{0.6}{\includegraphics{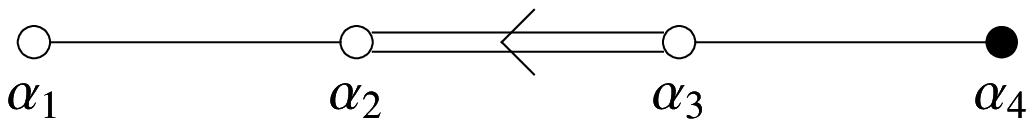}}
\caption{The Vogan diagram for FI}
\label{Fig-FI-Vogan}
\end{figure}

By choosing the Vogan diagram, we have actually fixed a positive root system $\Delta^+(\frg, \frt_f)$ with $\alpha_1, \dots, \alpha_4$ being the simple roots. Then correspondingly a positive root system $\Delta^+(\frk, \frt_f)$ is fixed, see Figure \ref{Fig-FI-K-Dynkin}, where the simple roots are
$$
\gamma_1=\alpha_1, \quad \gamma_2=\alpha_2, \quad \gamma_3=\alpha_3, \quad \gamma_4=2\alpha_1+4\alpha_2+3\alpha_3+2\alpha_4.
$$

\begin{figure}[H]
\centering
\scalebox{0.6}{\includegraphics{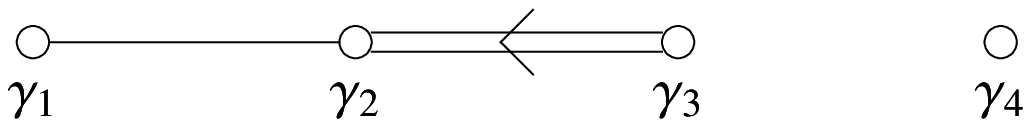}}
\caption{The Dynkin diagram for $\Delta^+(\frk, \frt_f)$}
\label{Fig-FI-K-Dynkin}
\end{figure}

Let us denote by $\xi_1, \xi_2, \xi_3, \xi_4$ (resp., $\varpi_1, \varpi_2, \varpi_3, \varpi_4$) be the fundamental weights for $\Delta^+(\frg, \frt_f)$ (resp., $\Delta^+(\frk, \frt_f)$). Let $a, b, c, d$ be arbitrary non-negative integers. Then one calculates that
\begin{equation}\label{FI-Kg}
a\varpi_1 + b \varpi_2 + c\varpi_3 + d \varpi_4=a\xi_1 + b\xi_2+ c\xi_3 +(-\frac{a}{2} - b - \frac{3}{2}c + \frac{d}{2}) \xi_4.
\end{equation}
Since $a\varpi_1 + b \varpi_2 + c\varpi_3 + d \varpi_4$ is the highest weight of a $K$-type if and only if $a+c+d$ is even, one sees from \eqref{FI-Kg} that Lemma \ref{lemma-linear} holds.

However, if we pass to the universal covering group of the linear real split $F_4$, which is \emph{non-linear}, then \eqref{FI-Kg} says that Lemma \ref{lemma-linear} fails.\hfill\qed
\end{example}

\section{Proof of the non-vanishing criterion}

We collect elements of $W(\frg, \frt_f)$ moving the dominant Weyl chamber for $\Delta^+(\frg, \frt_f)$ within the dominant Weyl chamber for $\Delta^+(\frk, \frt_f)$ as $W(\frg, \frt_f)^1$. It turns out that the multiplication map
$$
W(\frk, \frt_f) \times W(\frg, \frt_f)^1 \to W(\frg, \frt_f)
$$
induces a bijection \cite{Ko}. Therefore, the set $W(\frg, \frt_f)^1$ has cardinality $s$ defined in \eqref{s}.  Let us enumerate its elements as
\begin{equation}\label{WgtOne}
W(\frg, \frt_f)^1=\{w^{(0)}=e, w^{(1)}, \dots, w^{(s-1)} \}.
\end{equation}
Recall that the highest weights of ${\rm Spin}_G$ as $\frk$-module are exactly
\begin{equation}\label{Rhonj}
\rho_n^{(j)}=w^{(j)}\rho-\rho_K, \quad 0\leq j \leq s-1.
\end{equation}

\noindent\emph{Proof of Theorem \ref{thm-main}.}
As mentioned at the end of Section \ref{sec-pre}, it suffices to prove the ``$\Leftarrow$" direction. Assume that $H_D(Z)\neq 0$. Take any $\widetilde{L\cap K}$-type $\gamma_L$ in $H_D(Z)$. Here $\widetilde{L\cap K}$ stands for the pin covering group of $L\cap K$.
By Theorem \ref{thm-HP}, there exists $w_1\in W(\frl, \frt_f)$ such that
\begin{equation}\label{wlambdaL}
\gamma_L+\rho_{L\cap K}=w_1\lambda_L.
\end{equation}
In particular, it follows that $w_1 \lambda_L$ is dominant integral regular for $\Delta^+(\frl\cap\frk, \frt_f)$.

Put $\gamma_G:=\gamma_L+\rho(\fru\cap\frp)$. Then due to \eqref{rhoukp} and that
$w_1 \Delta(\fru)=\Delta(\fru)$, we have
\begin{equation}\label{inf-char-K-tilde-type}
\gamma_G+\rho_K=\gamma_L+\rho_{L\cap K}+\rho(\fru)=w_1 \lambda_L + \rho(\fru)=w_1(\lambda_L +\rho(\fru)).
\end{equation}
We \emph{claim} that $\gamma_G+\rho_K$ is dominant integral regular for $\Delta^+(\frk, \frt_f)$.

Firstly, since $\lambda_L+\rho(\fru)\in\frt_f^*$ is assumed further to meet the HP condition, there exist $w\in W(\frg, \frt_f)^1$ and $0\leq j\leq s-1$ such that
\begin{equation}\label{HP-G}
\{\delta-\rho_n^{(j)} \} +\rho_K=w(\lambda_L+\rho(\fru)).
\end{equation}
Note that
\begin{equation}\label{Ktilde-inf-char}
\{\delta-\rho_n^{(j)} \} +\rho_K=\delta-\rho_n^{(j)}+\sum_{n_i}n_i\gamma_i+\rho_K=
\delta-w^{(j)}\rho+2\rho_K + \sum_{n_i}n_i\gamma_i,
\end{equation}
where $n_i$ are some non-negative integers, and $\gamma_i$ are roots in $\Delta^+(\frk, \frt_f)$.
It follows from Lemma \ref{lemma-linear} and \eqref{Ktilde-inf-char} that $\{\delta-\rho_n^{(j)} \} +\rho_K$ is integral for $\Delta(\frg, \frt_f)$. Since $W(\frl, \frt_f)\leq W(\frg, \frt_f)$,  we conclude from \eqref{HP-G} and \eqref{wlambdaL}  that $w_1\lambda_L+\rho(\fru)$ is integral for $\Delta(\frg, \frt_f)$ as well. In particular,
$w_1\lambda_L+\rho(\fru)$ is integral for $\Delta(\frk, \frt_f)$.

Secondly, since $\lambda_L+\rho(\fru)$ is assumed to be good, one sees that
$$
\langle w_1\lambda_L+\rho(\fru), \alpha^{\vee}\rangle=\langle \lambda_L+\rho(\fru), w_1^{-1}(\alpha)^{\vee}\rangle >0,
\quad \forall \alpha\in\Delta^+(\frk\cap\fru, \frt_f).
$$
Moreover, for any $\alpha\in\Delta^+(\frl\cap\frk, \frt_f)$.
$$
\langle w_1\lambda_L+\rho(\fru), \alpha^{\vee}\rangle=\langle \gamma_L+\rho_{L\cap K}+\rho(\fru), \alpha^{\vee}\rangle=\langle \gamma_L+\rho_{L\cap K}, \alpha^{\vee}\rangle >0.
$$
Therefore, $w_1\lambda_L+\rho(\fru)$ is dominant regular for $\Delta^+(\frk, \frt_f)$.

To sum up, $w_1\lambda_L+\rho(\fru)$ is dominant integral regular for $\Delta^+(\frk, \frt_f)$. Thus the claim holds. Hence $\gamma_G=w_1\lambda_L+\rho(\fru)-\rho_K$ is  dominant integral for $\Delta^+(\frk, \frt_f)$. Thus $\gamma_G=\gamma_L+\rho(\fru\cap\frp)$ occurs in $H_D(\caL_S(Z))$ by Theorem \ref{thm-DH}.
\hfill\qed

\begin{rmk}\label{rmk-integral-inf-char}
Let $G$ be a simple linear Lie group in the Harish-Chandra class. Assume that $\Lambda\in\frh_f^*$ satisfies the HP condition. Then the above proof says that $\Lambda$ is conjugate to a vector in $\frt_f^*$ which is a non-negative integer combination of \eqref{fundwt-gt} under the action of $W(\frg, \frh_f)$.
\end{rmk}

Finally, let us present an example showing that Theorem \ref{thm-main} does not hold if the good range condition is loosen to be weakly good.

\begin{example}\label{exam-weakly-good}
Consider the representation \texttt{p} of \texttt{F4\_s} studied in Example \ref{exam-atlas-basic}.  As mentioned earlier, this is a weakly good $A_{\frq}(\lambda)$ module. The bottom layer of $\texttt{p}$ consists of the unique $K$-type
$$
\lambda + 2\rho(\fru\cap\frp)=\varpi_2+8\varpi_4,
$$
which is also the unique lowest $K$-type of $\texttt{p}$.
This $K$-type has spin norm $\sqrt{15}$, while $\|\Lambda\|=\sqrt{11}$. Thus this unique bottom layer $K$-type can not contribute to $H_D(\texttt{p})$, which then must vanish by Proposition 4.5 of \cite{DH}.
\hfill\qed
\end{example}

\section{The number of strings in $\widehat{G}^d$}\label{sec-number-strings}

As demonstrated in Section \ref{sec-scattered-FS-scattered} and Example 4.2 of \cite{DDY}, we can use translation functor to merge any Dirac series representation which is not fully supported into a string. See Example \ref{exam-EII-string-cancellation} as well. This will allow us to present the Dirac series neatly, and in particular, allows us to count the number of strings in $\widehat{G}^d$. In other words, \texttt{atlas} teaches us that it is quite natural to arrange the Dirac series of $G$ according to the support of their KGB elements.

For simplicity, we assume that $G$ is equal rank. Let $\{\xi_1, \dots, \xi_l\}$ be the corresponding fundamental weights corresponding to the simple roots of $\Delta^+(\frg, \frh_f)$.

We assume that Conjecture \ref{conj-unitarity-from-G-to-L-HP} holds for $G$. We further assume that the following \emph{binary condition} holds for $G$: Let $\pi$ with final \texttt{atlas} parameter $p=(x, \lambda, \nu)$ be an irreducible unitary representation. Let $\Lambda=\sum_{i=1}^{l} n_i \zeta_i$ be the infinitesimal character of $\pi$ which is integral (see Remark \ref{rmk-integral-inf-char}). Then each $n_i$ corresponding to a simple root in \texttt{support(x)} is either $0$ or $1$. The binary condition should be closely related to Conjecture 5.7' of \cite{SV}.

Let $S$ be any \emph{proper} subset of the simple roots of $\Delta^+(\frg, \frh_f)$.
We collect the dominant integral HP infinitesimal characters $\Lambda$ whose coordinates are $0$ or $1$ on the digits corresponding to $S$, and whose coordinates outside $S$ are $1$ by $\Omega(S)$. Denote by $N(S)$ the number of Dirac series representations with infinitesimal character in $\Omega(S)$ and support $S$.
Put
$$
N_i=\sum_{\#S=i} N(S).
$$
Then $N_0+N_1+\cdots+N_{l-1}$ is the number of strings in $\widehat{G}^d$.

\section{Dirac series of $E_{6(2)}$}

In this section, we fix $G$ as the simple real exceptional linear Lie group \texttt{E6\_q} in \texttt{atlas}. This connected group is equal rank. That is, $\frh_f=\frt_f$. It has center $\bbZ/3\bbZ$. The Lie algebra $\frg_0$ of $G$ is denoted as \texttt{EII} in \cite[Appendix C]{Kn}.
Note that
$$
-\dim \frk +\dim \frp=-38+40=2.
$$
Therefore, the group $G$ is also called $E_{6(2)}$ in the literature.

We present a Vogan diagram for $\frg_0$ in Fig.~\ref{Fig-EII-Vogan}, where $\alpha_1=\frac{1}{2}(1, -1,-1,-1,-1,-1,-1,1)$, $\alpha_2=e_1+e_2$ and $\alpha_i=e_{i-1}-e_{i-2}$ for $3\leq i\leq 6$. By specifying a Vogan diagram, we have actually fixed a choice of positive roots $\Delta^+(\frg, \frt_f)$.  Let $\zeta_1, \dots, \zeta_6\in\frt_f^*$ be the corresponding fundamental weights for $\Delta^+(\frg, \frt_f)$. The dual space $\frt_f^*$ will be identified with $\frt_f$ under the Killing form $B(\cdot, \cdot)$.
We will use $\{\zeta_1, \dots, \zeta_6\}$ as a basis to express the \texttt{atlas} parameters $\lambda$, $\nu$ and the infinitesimal character $\Lambda$. More precisely, in such cases, $[a, b, c, d, e, f]$ will stand for the vector $a\zeta_1+\cdots+f \zeta_6$.

\begin{figure}[H]
\centering
\scalebox{0.6}{\includegraphics{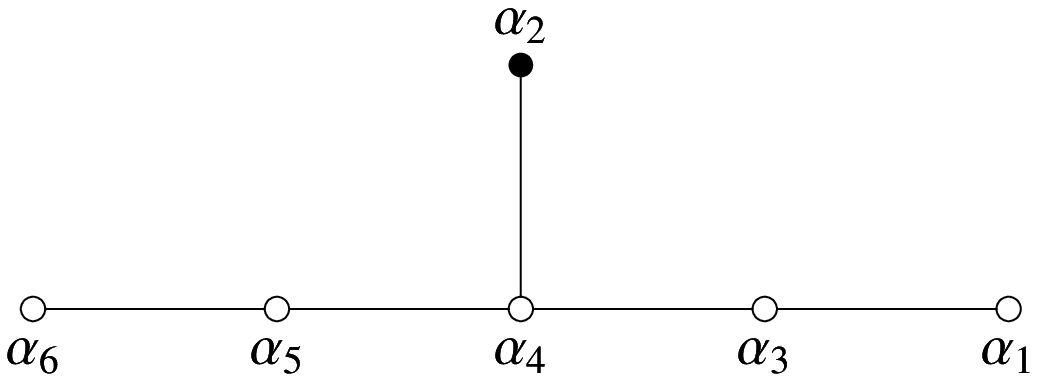}}
\caption{The Vogan diagram for EII}
\label{Fig-EII-Vogan}
\end{figure}

Put $\gamma_i=\alpha_{7-i}$ for $1\leq i\leq 4$, $\gamma_5=\alpha_1$, and
$$
\gamma_6=\alpha_1+ 2\alpha_2+ 2\alpha_3+ 3\alpha_4+ 2\alpha_5 +\alpha_6=
(\frac{1}{2},\frac{1}{2},\frac{1}{2},\frac{1}{2},
\frac{1}{2},-\frac{1}{2},-\frac{1}{2},\frac{1}{2}),
$$
which is the highest root of $\Delta^+(\frg, \frt_f)$.
Then $\gamma_1, \dots, \gamma_6$ are the simple roots of $\Delta^+(\frk, \frt_f)=\Delta(\frk, \frt_f)\cap \Delta^+(\frg, \frt_f)$. We present the Dynkin diagram of $\Delta^+(\frk, \frt_f)$  in Fig.~\ref{Fig-EII-K-Dynkin}.
Let $\varpi_1, \dots, \varpi_6\in \frt_f^*$ be the corresponding fundamental weights.

\begin{figure}[H]
\centering
\scalebox{0.6}{\includegraphics{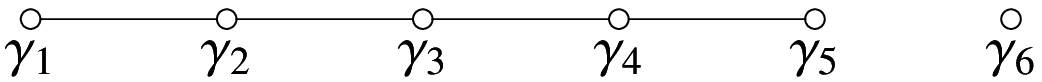}}
\caption{The Dynkin diagram for $\Delta^+(\frk, \frt_f)$}
\label{Fig-EII-K-Dynkin}
\end{figure}

Let $E_{\mu}$ be the $\frk$-type with highest weight $\mu$.
We will use $\{\varpi_1, \dots, \varpi_6\}$ as a basis to express $\mu$.
Namely, in such a case, $[a, b, c, d, e, f]$ stands for the vector $a \varpi_1+b \varpi_2 + c \varpi_3+d \varpi_4 +e \varpi_5+ f\varpi_6$. For instance,
$$
\beta:=\alpha_1+ \alpha_2+ 2\alpha_3+ 3\alpha_4+ 2\alpha_5 +\alpha_6=[0,0,1,0,0,1]
$$ and $\dim \frp=\dim E_{\beta}=40$. The $\frk$-type $E_{[a, b, c, d, e, f]}$ has lowest weight $[-e, -d, -c, -b, -a, -f]$. Therefore, $E_{[e, d, c, b, a, f]}$ is the contragredient $\frk$-type of $E_{[a, b, c, d, e, f]}$. For $a, b,c ,d ,e, f\in\bbZ_{\geq 0}$, we have that $E_{[a, b, c, d, e, f]}$ is  a $K$-type if and only if
\begin{equation}\label{EII-K-type}
a+c+e+f \mbox{ is even.}
\end{equation}
Note that
$$
\# W(\frg, \frt_f)^1=\frac{51840}{1440}=36.
$$

In the current setting, the spin norm of the $\frk$-type $E_{\mu}$ specializes as
$$
\|\mu\|_{\rm spin} =\min_{0\leq j \leq 35}\|\{\mu-\rho_n^{(j)}\}+\rho_K\|.
$$
Note that $E_{\{\mu-\rho_n^{(j)}\}}$ is the PRV component \cite{PRV} of the tensor product of $E_{\mu}$ with the contragredient $\frk$-type of $E_{\rho_n^{(j)}}$.
Let $\pi$ be any infinite-dimensional irreducible $(\frg, K)$ module with infinitesimal character $\Lambda$. Put the spin norm of $\pi$ as
$$
\|\pi\|_{\rm spin}=\min \|\delta\|_{\rm spin},
$$
where $V_{\delta}$ runs over all the $K$-types of $\pi$.
If $V_{\delta}$ attains $\|\pi\|_{\rm spin}$, we will call it a \emph{spin-lowest $K$-type} of $\pi$. If $\pi$ is further assumed to be unitary,
Parthasarathy's Dirac operator inequality \cite{Pa2} can be rephrased as
\begin{equation}\label{Dirac-inequality}
\|\pi\|_{\rm spin}\geq \|\Lambda\|.
\end{equation}
Moreover, as shown in \cite{D13}, the equality happens in \eqref{Dirac-inequality} if and only if $\pi$ has non-zero Dirac cohomology, and in this case, it is exactly the (finitely many) spin-lowest $K$-types of $\pi$ that contribute to $H_D(\pi)$.

Since \texttt{E6\_q} is not of Hermitian symmetric type, results from \cite{Vog80} says that the $K$-type $V_{\delta + n\beta}$ must show up in $\pi$ for any non-negative integer $n$ provided that $V_{\delta}$ occurs in $\pi$. We call them the \emph{Vogan pencil} starting from $V_{\delta}$.   Now it follows from \eqref{Dirac-inequality} that
\begin{equation}\label{Dirac-inequality-improved}
\|\delta+n\beta\|_{\rm spin}\geq \|\Lambda\|, \quad \forall n\in \bbZ_{\geq 0}.
\end{equation}
In other words, whenever \eqref{Dirac-inequality-improved} fails, we can conclude that $\pi$ is non-unitary. Distribution of spin norm along Vogan pencils has been discussed in Theorem C of \cite{D20}. In practice,
we will take $\delta$ to be a lowest $K$-type of $\pi$ and use the corresponding Vogan pencil to do non-unitarity test.

This section aims to classify the Dirac series of $E_{6(2)}$, see Theorem \ref{thm-EII}.

\subsection{Fully supported scattered representations of $E_{6(2)}$}\label{sec-FS-EII}

This subsection aims to seive out all the fully supported scattered Dirac series representations for $E_{6(2)}$ using the algorithm in \cite{D20}.

\begin{lemma}\label{lemma-E62-HP}
Let $\Lambda=a\zeta_1+b\zeta_2+c\zeta_3+d\zeta_4+e\zeta_5+f \zeta_6$ be the infinitesimal character of any Dirac series representation $\pi$ of $E_{6(2)}$ which is dominant with respect to $\Delta^+(\frg, \frt_f)$. Then $a$, $b$, $c$, $d$, $e$, $f$ must be non-negative integers such that $a+c>0$, $b+d>0$, $c+d>0$, $d+e>0$ and $e+f>0$.
\end{lemma}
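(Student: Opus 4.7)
The plan is to first deduce non-negativity and integrality from dominance and the HP condition, and then to settle the five strict-positivity conditions by a short structural argument that exploits the bracket relation $[\frp,\frp]\subset\frk$.

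First, since $\Lambda$ is assumed dominant with respect to $\Delta^+(\frg,\frt_f)$, each of the coefficients $a,\dots,f$ equals the pairing of $\Lambda$ with a simple coroot $\alpha_i^\vee$, so $a,b,c,d,e,f\geq 0$. Next, because $\pi\in\widehat{E_{6(2)}}^d$, its infinitesimal character $\Lambda$ satisfies the HP condition \eqref{Lambda-HP} by Theorem \ref{thm-HP}. Since $E_{6(2)}$ is linear and simple, Lemma \ref{lemma-linear} together with Remark \ref{rmk-integral-inf-char} ensure that $\Lambda$ is an integer combination of the fundamental weights $\zeta_1,\dots,\zeta_6$. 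Combined with the previous step, this yields $a,b,c,d,e,f\in\bbZ_{\geq 0}$.

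For the strict-positivity conditions, I would argue by contradiction. Assume that $\langle\Lambda,\alpha_i^\vee\rangle=\langle\Lambda,\alpha_j^\vee\rangle=0$ for two simple roots $\alpha_i,\alpha_j$ that are adjacent in the Dynkin diagram of $E_6$; reading Figure \ref{Fig-EII-Vogan}, the five such adjacent pairs are precisely the ones singled out by $a+c>0$, $c+d>0$, $b+d>0$, $d+e>0$, and $e+f>0$. Since $E_6$ is simply laced, $\alpha_i$ and $\alpha_j$ span an $A_2$ subsystem of $\Delta(\frg,\frt_f)$. The $W(\frg,\frt_f)$-stabilizer of $\Lambda$ therefore contains the $A_2$ Weyl group generated by $s_{\alpha_i}$ and $s_{\alpha_j}$, which forces $\langle\Lambda,(\alpha_i+\alpha_j)^\vee\rangle=0$ as well. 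Invoking the HP condition, I write $\Lambda=w^{-1}(\mu+\rho_K)$ where $\mu:=\{\delta-\rho_n^{(j_0)}\}$ is dominant integral for $\Delta^+(\frk,\frt_f)$ and $w\in W(\frg,\frt_f)$. Applying $w$ translates the three vanishing relations into the statement that $\mu+\rho_K$ is annihilated by the coroots of the $A_2$ subsystem $w\cdot\{\alpha_i,\alpha_j,\alpha_i+\alpha_j\}\subset\Delta(\frg,\frt_f)$.

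The punch line is the observation that every $A_2$ subsystem of $\Delta(\frg,\frt_f)$ contains at least one compact root: if two $A_2$-simple roots $\alpha,\beta$ were both non-compact, then $[\frg_\alpha,\frg_\beta]\subset[\frp,\frp]\subset\frk$ would force their sum $\alpha+\beta$ to be compact. Hence $\mu+\rho_K$ pairs trivially with some compact coroot, contradicting the strict $\frk$-regularity of $\mu+\rho_K$ (guaranteed because $\mu$ is $\frk$-dominant integral and $\rho_K$ is strictly $\frk$-dominant). The whole argument is short; the only subtleties are the bookkeeping between $\Lambda$ and $\mu+\rho_K$ under $w$, and the structural fact that any $A_2$ subsystem of $\Delta(\frg,\frt_f)$ must contain a compact root. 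Neither step looks like a serious obstacle.
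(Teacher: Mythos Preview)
Your proof is correct, but it follows a genuinely different route from the paper's.  The paper disposes of each inequality, say $a+c>0$, by a brute-force check: assuming $a=c=0$, one runs through the $36$ elements $w\in W(\frg,\frt_f)^1$ and verifies case by case that some coordinate of $w\Lambda$ in the basis $\{\varpi_1,\dots,\varpi_6\}$ vanishes, so that no equation $\{\delta-\rho_n^{(j)}\}+\rho_K=w\Lambda$ can hold.  Your argument replaces this computation with a structural observation: the two vanishing coordinates sit on adjacent nodes of the (simply-laced) $E_6$ diagram, so they span an $A_2$ subsystem orthogonal to $\Lambda$; its $w$-translate is then orthogonal to $\mu+\rho_K$, and the identity $[\frp,\frp]\subset\frk$ forces that translated $A_2$ to contain a compact root, contradicting the $\frk$-regularity of $\mu+\rho_K$.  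The advantage of your approach is that it is uniform and conceptual---it applies verbatim to any equal-rank simply-laced linear group and explains \emph{why} only adjacent pairs of zeros are excluded---whereas the paper's check is specific to $E_{6(2)}$ and to the concrete list of $36$ Weyl elements.  The paper's method, on the other hand, requires no Lie-theoretic insight and is easily automated, which fits its overall computational strategy.
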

\begin{proof} It follows from Remark \ref{rmk-integral-inf-char} that $a$, $b$, $c$, $d$, $e$, $f$ must be non-negative integers.

Now if $a+c=0$, i.e., $a=c=0$, then a direct check says that for any $w\in W(\frg, \frt_f)^1$, at least one coordinate of $w\Lambda$ in terms of the basis $\{\varpi_1, \dots, \varpi_6\}$ vanishes. Therefore,
$$
\{\mu-\rho_n^{(j)}\} + \rho_K =w \Lambda
$$
could not hold. This proves that $a+c>0$, other inequalities can be similarly obtained.
\end{proof}

To obtain all the fully supported scattered Dirac series representations of $E_{6(2)}$, now it suffices to consider all the infinitesimal characters $\Lambda=[a, b, c, d, e, f]$ such that
\begin{itemize}
\item[$\bullet$] $a$, $b$, $c$, $d$, $e$, $f$ are non-negative integers;
\item[$\bullet$] $a+c>0$, $b+d>0$, $c+d>0$, $d+e>0$, $e+f>0$;
\item[$\bullet$] $\min\{a, b, c, d, e, f\}=0$;
\item[$\bullet$] there exists a fully supported KGB element $x$ such that $\|\Lambda-\theta_x\Lambda\|\leq \|2\rho\|$.
\end{itemize}
Let us collect them as $\Phi$. It turns out that $\Phi$ has cardinality $58061$. There are $21$ elements of $\Phi$ whose largest entry equals to $1$:
\begin{align*}
&[0, 0, 1, 1, 0, 1], [0, 0, 1, 1, 1, 0], [0, 0, 1, 1, 1, 1], [0, 1, 1,
0, 1, 0], [0, 1, 1, 0, 1, 1], [0, 1, 1, 1, 0, 1], \\
&[0, 1, 1, 1, 1, 0], [0, 1, 1, 1, 1, 1], [1, 0, 0, 1, 0, 1], [1, 0, 0, 1, 1, 0], [1, 0, 0,
1, 1, 1], [1, 0, 1, 1, 0, 1],\\
&[1, 0, 1, 1, 1, 0], [1, 0, 1, 1, 1, 1], [1, 1, 0, 1, 0, 1], [1, 1, 0, 1, 1, 0], [1, 1, 0, 1, 1, 1], [1, 1, 1, 0, 1, 0],\\
&[1, 1, 1, 0, 1, 1], [1, 1, 1, 1, 0, 1], [1, 1, 1, 1, 1, 0].
\end{align*}
A careful study of the irreducible unitary representations under the above $21$ infinitesimal characters leads us to Section \ref{sec-appendix}. Let $p=(x, \lambda, \nu)$ be a fully supported scattered Dirac series representation of $E_{6(2)}$ with infinitesimal character $\Lambda$. It can happen that there exists another fully supported scattered Dirac series representation  $p'=(x', \lambda', \nu')$ of $E_{6(2)}$ with infinitesimal character $\Lambda'$ such  that $\Lambda'$ (resp. $\lambda'$, $\nu'$) is obtained from $\Lambda$ (resp. $\lambda$, $\nu$) by exchanging its first and sixth, third and fifth coordinates. Moreover, the spin lowest $K$-types of $p'$ are exactly the contragredient $K$-types of those of $p$.  Whenever this happens, we will \emph{fold} $p'$ by omitting $\lambda'$, $\nu'$ and the spin-lowest $K$-types of $p'$. Instead, we only present $x'$ in the \textbf{bolded} fashion in the last column along with $p$. The reader can recover $p'$ easily. For instance,  let us come to the following representation in Table \ref{table-EII-111011},
$$p=\rm{parameter(KGB(G,1624)}, [1,1,4,-1,1,1], [1,1,4,-3,1,1]),$$
which has spin-lowest $K$-types $[0,3,0,0,0,0]$, $[0,3,1,0,0,1]$, $[0,3,2,0,0,2]$.
The bolded \textbf{1623} says that
$$p'=\rm{parameter(KGB(G,1623)}, [1,1,1,-1,4,1], [1,1,1,-3,4,1])$$
is also a fully supported scattered Dirac series representation. Moreover, its spin-lowest $K$-types are
$[0,0,0,3,0,0]$, $[0,0,1,3,0,1]$, $[0,0,2,3,0,2]$.

For the other $58040$ elements of $\Phi$, we use Parthasarathy's Dirac operator inequality, and distribution of spin norm along the Vogan pencil starting from one lowest $K$-type to verify that there is no fully supported irreducible unitary representations with infinitesimal character $\Lambda$. This method turns out to be very effective in non-unitarity test. Indeed, it fails only on the following infinitesimal characters of $\Phi$:
\begin{align*}
[0, 0, 1, 1, 0, 2], [0, 2, 1, 0, 1, 0], [0, 2, 1, 0, 1, 1], [1, 0, 0, 1, 0, 2],\\
[1, 2, 1, 0, 1, 0], [2, 0, 0, 1, 0, 1], [2, 0, 0, 1, 0, 2], [2, 0, 0, 1, 1, 0].
\end{align*}
However, a more careful look says that there is no fully supported irreducible unitary representation under them. Let us provide one example.

\begin{example}
Consider the infinitesimal character
$\Lambda=[0, 0, 1, 1, 0, 2]$ for $\texttt{E6\_q}$.

\begin{verbatim}
G:E6_q
set all=all_parameters_gamma(G,[0,0,1,1,0,2])
#all
Value: 263
set i=0
void: for p in all do if #support(x(p))=6 then i:=i+1 fi od
i
Value: 110
\end{verbatim}
There are $263$ irreducible representations with infinitesimal character $\Lambda$, among which $110$ are fully supported. A careful look at them says that the non-unitarity test using the pencil starting from one of the lowest $K$-types fails only for the following representation:
\begin{verbatim}
set p=parameter(KGB(G,1536),[0,0,3,0,1,1],[0,0,4,-2,0,2])
\end{verbatim}
Indeed, it has a unique lowest $K$-type $\delta=[0,3,0,0,0,0]$, and the minimum spin norm along the pencil $\{\delta+n\beta\mid n\in\bbZ_{\geq 0}\}$ is $\sqrt{42}$, while $\|\Lambda\|=6$. Thus Dirac inequality does not work here. Instead, we check the unitarity of \texttt{p} directly:
\begin{verbatim}
is_unitary(p)
Value: false
\end{verbatim}
Thus there is \emph{no} fully supported irreducible unitary representation with infinitesimal character $\Lambda$.
\hfill\qed
\end{example}

\subsection{Conjecture \ref{conj-unitarity-from-G-to-L-HP} and binary condition for $E_{6(2)}$}\label{sec-EII-esc}

In this subsection, let us verify that $E_{6(2)}$ satisfies Conjecture \ref{conj-unitarity-from-G-to-L-HP} and the binary condition.
Let $p=(x, \lambda, \nu)$ be any irreducible unitary representation with infinitesimal character $\Lambda=[a, b, c, d, e, f]$ meeting the requirements in Lemma \ref{lemma-E62-HP}. It suffices to check that the coordinates of $\Lambda$ within the support of $x$ are either $0$ or $1$,  and that the inducing module $p_{L(x)}$ is unitary.

\begin{example}
Consider the case that $\texttt{support(x)=[0, 1, 2, 3, 4]}$. There are $168$ such KGB elements in total. We compute that there are $24109$ infinitesimal characters $\Lambda=[a, b, c, d, e, f]$ in total such that
\begin{itemize}
\item[$\bullet$] $a$, $b$, $c$, $d$, $e$ are non-negative integers, $f=0$ or $1$;
\item[$\bullet$] $a+c>0$, $b+d>0$, $c+d>0$, $d+e>0$, $e+f>0$;
\item[$\bullet$] $\min\{a, b, c, d, e, f\}=0$;
\item[$\bullet$] there exists a  KGB element $x$ with support $[0, 1, 2, 3, 4]$ such that $\|\Lambda-\theta_x\Lambda\|\leq \|2\rho\|$.
\end{itemize}
The reason that we can reduce the verification of the unitarity of $p_{L(x)}$ to the cases $f=0$ or $1$ is due to the relation between translation functor and cohomological induction, see Theorem 7.237 of \cite{KV}.

As in Section \ref{sec-FS-EII}, we exhaust all the irreducible unitary representations under these infinitesimal characters  with the above $168$ KGB elements. It turns out that such representations occur only when $a, b, c, d, e=0$ or $1$.  Then we check that each $p_{L(x)}$ is indeed unitary. \hfill\qed
\end{example}

All the other non fully supported KGB elements are handled similarly. Eventually we conclude that $E_{6(2)}$ satisfies Conjecture \ref{conj-unitarity-from-G-to-L-HP} and the binary condition.

\subsection{Number of strings in $\widehat{E_{6(2)}}^d$}

Thanks to Section \ref{sec-EII-esc}, each representation in $\widehat{E_{6(2)}}^d$ whose KGB element is not fully supported can be equipped into a string in the fashion of Example \ref{exam-EII-string-cancellation}. Using the formula in Section \ref{sec-number-strings}, let us pin down the number of strings in $\widehat{E_{6(2)}}^d$ in this subsection.

We compute that
$$
N([0,1,2,3,4])=N([1,2,3,4,5])=45, \quad N([0,2,3,4,5])=29,
$$
and that
$$
N([0,1,3,4,5])=N([0,1,2,3,5])=7, \quad N([0,1,2,4,5])=1.
$$
In particular, it follows that
$$
N_5=2\times 45 + 29+ 2\times 7 + 1=134.
$$
We also compute that
$$
N_0=36, \quad N_1=60, \quad N_2=80, \quad N_3=115, \quad N_4=151.
$$
Therefore, the total number of strings for $E_{6(2)}$ is equal to
$$
\sum_{i=0}^{5} N_i=576.
$$

To end up with this section, we mention that some auxiliary files have been built up to facilitate the classification of the Dirac series of $E_{6(2)}$. They are available via the following link:
\begin{verbatim}
https://www.researchgate.net/publication/353352799_EII-Files
\end{verbatim}

\section{Cancellation in Dirac cohomology}\label{sec-can-EII}

It is interesting to note that cancellation continues to happen within the Dirac cohomology of some fully supported scattered members of $\widehat{E_{6(2)}}^{\rm d}$ when passing  to Dirac index.  There are $10$ such representations in total, and in each case the Dirac index vanishes. We will mark their KGB elements with stars (see Section \ref{sec-appendix}).

\begin{example}\label{exam-EII-cancellation}
Consider the following representation $\pi$
\begin{verbatim}
final parameter(x=1649,lambda=[2,-2,0,4,-1,1]/1,nu=[2,-3,0,5,-3,2]/2)
\end{verbatim}
It has infinitesimal character $[1, 0, 0, 1, 0, 1]$, which is conjugate to $\rho_K$ under the action of $W(\frg, \frt_f)$. The representation $\pi$ has four spin LKTs:
$$
[2,0,2,1,0,2]=\rho_n^{(25)}, \, [2,1,1,0,1,4]=\rho_n^{(13)}, \, [3,0,1,1,1,1]=\rho_n^{(27)}, \, [3,1,0,0,2,3]=\rho_n^{(16)}.
$$
Therefore, $H_D(\pi)$ consists of four copies of the trivial $\widetilde{K}$-type.

Note that $-\rho_n^{(25)}$, $-\rho_n^{(13)}$, $-\rho_n^{(27)}$, $-\rho_n^{(16)}$ are the lowest weights of $E_{\rho_n^{(22)}}$, $E_{\rho_n^{(15)}}$, $E_{\rho_n^{(26)}}$ and $E_{\rho_n^{(18)}}$, respectively.
Moreover,
$w^{(22)}=s_2s_4s_5s_6s_3s_4s_5s_1$ and $w^{(15)}=s_2s_4s_5s_6s_3s_4$
have even lengths, while
$w^{(26)}=s_2s_4s_5s_6s_3s_4s_5s_2s_1$ and $w^{(18)}=s_2s_4s_5s_6s_3s_4s_2$
have odd lengths.
Therefore, two trivial $\widetilde{K}$-type live in the even part of $H_D(\pi)$,
while the other two live in the odd part of $H_D(\pi)$. See Lemma 2.3 of \cite{DW21}. As a consequence, the Dirac index of $\pi$ vanishes.

Note that ${\rm DI}(\pi)$ can also be easily calculated by \texttt{atlas} using \cite{MPVZ}:
\begin{verbatim}
G:E6_q
set p=parameter (KGB (G)[1649],[3,-1,0,4,-3,3]/1,[2,-3,0,5,-3,2]/2)
show_dirac_index(p)
Dirac index is 0
\end{verbatim}
which agrees with the previous calculation. \hfill\qed
\end{example}

\begin{example}\label{exam-EII-minimal}
The first entry of Table \ref{table-EII-111011} is the minimal representation of $E_{6(2)}$. It has one LKT $[0,0,0,0,0,2]$ and four spin LKTs
$$
[0,0,1,0,0,3], \, [0,0,2,0,0,4], \, [0,0,3,0,0,5], \, [0,0,4,0,0,6].
$$
We calculate that its Dirac index vanishes.
 \hfill\qed
\end{example}

\begin{example}\label{exam-EII-string-cancellation}
Consider the following irreducible representation $\pi_{0,0}$:
\begin{verbatim}
set p00=parameter(KGB(G)[851],[-1,1,1,1,1,-1]/1,[-2,1,1,0,1,-2]/1)
is_unitary(p00)
Value: true
infinitesimal_character(p00)
Value: [ 0, 1, 1, 0, 1, 0 ]/1
support(KGB(G,851))
Value: [1,2,3,4]
show_dirac_index(p00)
Dirac index is 0
\end{verbatim}
The representation $\pi_{0,0}$ has two multiplicity-free spin LKTs: $[1,1,1,1,1,1]$ and $[2,0,2,0,2,2]$. Cancellation happens within $H_D(\pi_{0,0})$, resulting that ${\rm DI}(\pi_{0,0})=0$.
\begin{verbatim}
set (Q,q00)=reduce_good_range(p00)
goodness(q00,G)
Value: "Weakly good"
Levi(Q)
Value: connected quasisplit real group with Lie algebra 'so(4,4).u(1).u(1)'
is_unitary(q00)
Value: true
\end{verbatim}
Thus the representation $\pi_{0,0}$ is cohomologically induced from the irreducible unitary representation \texttt{q00} of  \texttt{Levi(Q)}. Note that \texttt{q00} is actually the \emph{minimal} representation of  \texttt{Levi(Q)}. Moreover, we compute that cancellation happens within the Dirac cohomology of \texttt{q00} and its Dirac index vanishes.

Now for any non-negative integers $a, f$, we  move  the first and last coordinates of $\lambda(\texttt{q00})$ to $a$ and $f$, respectively. Then we  arrive at the irreducible unitary representation \texttt{qaf} of \texttt{Levi(Q)}. Doing cohomological induction from \texttt{qaf} will give us an irreducible unitary representation $\pi_{a, f}$ of $G$. By Theorem \ref{thm-DH}, $\pi_{a, f}$ must have non-zero Dirac cohomology. Indeed, $\pi_{a, f}$ has two multiplicity-free spin LKTs: $[1,a+1,1,f+1,1,1]$ and $[2,a,2,f,2,2]$. Moreover, by Proposition 4.1 of \cite{DW21-1}, we have that ${\rm DI}(\pi_{a, f})=0$.

Although it is not  easy to check that whether $\pi_{0, 0}$ is a scattered member of $\widehat{E_{6(2)}}^d$ or not, we can simply embed it into the string $\pi_{a, f}$, where $a, f$ run over $\bbZ_{\geq 0}$. \hfill\qed
\end{example}
\begin{rmk}\label{rmk-cancellation-quaternionic}
The Levi subgroup \texttt{Levi(Q)} is also of quaternionic type.
The cancellation phenomenon seems to be closely related to quaternionic real forms.
\end{rmk}

\section{Special unipotent representations}\label{sec-EII-su}

In the list \cite{LSU} offered by Adams, the group \texttt{E6\_q} has $47$  special unipotent representations. Ten of them are also fully supported with non-zero Dirac cohomology. We mark the nine non-trivial ones with the subscript $\clubsuit$ in Section \ref{sec-appendix}. It is interesting to note that except for the first entry of Table \ref{table-EII-111011} (the minimal representation), the other eight non-trivial special unipotent representations are all $A_{\frq}(\lambda)$ modules. A brief summary is given below.

\begin{center}
\begin{tabular}{c|c}
special unipotent representation & realization as an $A_q(\lambda)$ module \\
\hline
1st entry of Table \ref{table-EII-110101} & \texttt{KGP(G,[0,1,2,3,4])[1]}, $f\downarrow 6$, Weakly fair\\
\hline
2nd entry of Table \ref{table-EII-110101} & \texttt{KGP(G,[0,1,2,3,4])[0]}, $f\downarrow 6$, Weakly fair\\
\hline
1st entry of Table \ref{table-EII-100101} & \texttt{KGP(G,[0,2,3,4,5])[1]}, $b\downarrow 6$, None\\
\hline
2nd entry of Table \ref{table-EII-100101} & \texttt{KGP(G,[0,2,3,4,5])[2]}, $b\downarrow 6$, None\\
\hline
3rd entry of Table \ref{table-EII-100101} & \texttt{KGP(G,[0,2,3,4,5])[2]}, $b\downarrow 5$, Fair\\
\hline
4th entry of Table \ref{table-EII-100101} & \texttt{KGP(G,[0,2,3,4,5])[1]}, $b\downarrow 5$, Fair\\
\hline
6th entry of Table \ref{table-EII-100101} & \texttt{KGP(G,[0,2,3,4,5])[0]}, $b\downarrow 6$, None\\
\hline
9th entry of Table \ref{table-EII-100101} & \texttt{KGP(G,[0,2,3,4,5])[0]}, $b\downarrow 5$, Fair\\
\hline
\end{tabular}
\end{center}

\begin{example}\label{EII-su-Aq}
Let us explain the second row of the above table.
\begin{verbatim}
G:E6_q
set P=KGP(G,[0,1,2,3,4])
#P
Value: 4
void: for i:4 do prints(P[i],"  ",is_parabolic_theta_stable(P[i])) od
([0,1,2,3,4],KGB element #1260)  true
([0,1,2,3,4],KGB element #1434)  true
([0,1,2,3,4],KGB element #1776)  false
([0,1,2,3,4],KGB element #1790)  false
set L=Levi(P[0])
set t=trivial(L)
set tm6=parameter(x(t),lambda(t)-[0,0,0,0,0,6],nu(t))
goodness(tm6,G)
Value: "Weakly fair"
theta_induce_irreducible(tm6,G)
Value:
1*parameter(x=1773,lambda=[2,2,0,1,0,2]/1,nu=[3,3,-1,2,-1,3]/2) [11]
\end{verbatim}
Other rows are interpreted similarly. \hfill\qed
\end{example}

\section{Appendix}\label{sec-appendix}

This appendix presents all the $55$ \emph{non-trivial} fully supported scattered representations in $\widehat{E_{6(2)}}^d$ according to their infinitesimal characters. Since each coordinate of any involved infinitesimal character is either $0$ or $1$, it follows from Lemma 2.2 of \cite{DW21-1} that each spin-lowest $K$-type must be u-small.

\begin{table}[H]
\centering
\caption{Infinitesimal character $[1,0,0,1,1,1]$ and $[1,0,1,1,0,1]$}
\begin{tabular}{lcccc}
 $\# x$ & $\lambda$ & $\nu$ & Spin LKTs  & $\# x^\prime$ \\
\hline
$1686$ & $[3,-4,-2,5,3,1]$ & $[1,-2,-2,3,1,1]$ & $[0,0,2,2,0,2]$ & \textbf{1687}\\

$1592$ & $[1,-1,-1,3,1,2]$ & $[0,-2,-\frac{3}{2},\frac{7}{2},0,\frac{3}{2}]$ & $[2,0,1,0,0,7]$ & \textbf{1612}

\end{tabular}
\label{table-EII-100111}
\end{table}

\begin{table}[H]
\centering
\caption{Infinitesimal character  $[1,1,0,1,1,1]$ and $[1,1,1,1,0,1]$}
\begin{tabular}{lcccc}
 $\# x$ & $\lambda$ & $\nu$ & Spin LKTs  & $\# x^\prime$ \\
\hline
$1539$ & $[4,1,-3,3,1,1]$ & $[5,1,-4,1,1,1]$ & $[0,4,0,0,0,0]$, $[0,4,1,0,0,1]$ & \textbf{1540} \\

$1415$ & $[3,2,-1,1,1,1]$ & $[5,\frac{3}{2},-\frac{7}{2},0,2,0]$ & $[0,0,0,0,4,8]$, $[0,0,1,0,4,9]$ &\textbf{1398}

\end{tabular}
\label{table-EII-110111}
\end{table}

\begin{table}[H]
\centering
\caption{Infinitesimal character $[0,1,1,0,1,1]$ and $[1,1,1,0,1,0]$}
\begin{tabular}{lcccc}
 $\# x$ & $\lambda$ & $\nu$ & Spin LKTs  & $\# x^\prime$ \\
\hline
$1761$ & $[1,6,4,-3,3,1]$ & $[-1,2,2,-1,1,1]$ & $[0,0,2,1,0,2]$, $[0,0,3,1,0,3]$ & \textbf{1763}\\
& & & $[0,1,1,2,0,1]$, $[0,1,2,2,0,2]$ & \\

$1722$ & $[-1,3,2,0,1,2]$ & $[-\frac{3}{2},2,\frac{3}{2},0,0,\frac{3}{2}]$ & $[1,0,1,0,0,6]$, $[1,0,2,0,0,7]$ & \textbf{1728}\\
& & & $[2,0,0,0,1,7]$, $[2,0,1,0,1,8]$ & \\

$874$ & $[-1,1,4,-2,2,1]$ & $[-\frac{5}{2},1,\frac{7}{2},-\frac{5}{2},1,1]$ & $[0,2,1,0,3,4]$, $[0,3,0,0,2,6]$ & \textbf{896}\\
& & & $[1,1,0,1,3,6]$ &

\end{tabular}
\label{table-EII-011011}
\end{table}

\begin{table}[H]
\centering
\caption{Infinitesimal character $[0,1,1,0,1,0]$}
\begin{tabular}{lccc}
 $\# x$ & $\lambda$ & $\nu$ & Spin LKTs   \\
\hline
$1561$& $[-1,1,3,-1,3,-1]$ & $[-\frac{3}{2},1,\frac{5}{2},-\frac{3}{2},\frac{5}{2},-\frac{3}{2}]$ & $[0,0,4,0,0,2]$, $[0,1,0,1,0,6]$  \\
& & & $[0,2,0,2,0,6]$, $[1,1,0,1,1,8]$  \\

$1502$ & $[-1,1,3,-1,3,-1]$& $[-\frac{3}{2},0,\frac{5}{2},-1,\frac{5}{2},-\frac{3}{2}]$ & $[2,0,2,0,2,2]$, $[2,1,0,1,2,0]$  \\
& & & $[3,0,0,0,3,2]$  \\
\end{tabular}
\label{table-EII-011010}
\end{table}

\begin{table}[H]
\centering
\caption{Infinitesimal character $[1,1,0,1,0,1]$}
\begin{tabular}{lcccc}
 $\# x$ & $\lambda$ & $\nu$ & Spin LKTs  & $\# x^\prime$ \\
\hline
$1787_{\clubsuit}$ & $[1,1,1,4,-1,2]$ & $[1,1,0,1,0,1]$& $[0,0,4,0,0,4]$,$[0,1,2,1,0,2]$ & \\
& & & $[0,2,0,2,0,0]$ & \\

$1773_{\clubsuit}$ & $[2,2,0,1,0,2]$ & $[\frac{3}{2},\frac{3}{2},-\frac{1}{2}, 1, -\frac{1}{2}, \frac{3}{2}]$ & $[0,0,1,0,0,5]$, $[1,0,1,0,1,7]$ &\\
& & & $[2,0,1,0,2,9]$ & \\

$1352$ & $[1,1,-1,3,-1,1]$ & $[1,1,-\frac{5}{2},\frac{7}{2},-\frac{5}{2},1] $ & $[0,0,4,0,0,4]$, $[0,1,1,1,0,7]$ & \\
& & & $[0,2,0,2,0,8]$ & \\

$1269$ & $[1,1,-1,3,-1,1]$ & $[1,0,-\frac{5}{2},4,-\frac{5}{2},1]$ & $[3,0,0,0,3,0]$, $[3,0,1,0,3,1]$ & \\

$1166$ & $[7,3,-2,1,-3,6]$ & $[3,1,-2,1,-2,3]$ & $[0,2,0,2,0,0]$, $[1,2,0,2,1,2]$ & \\

$977$ & $[2,1,-1,2,-1,2]$ & $[\frac{5}{2}, 1, -\frac{5}{2}, 2, -\frac{5}{2},\frac{5}{2}]$ & $[0,1,0,1,0,12]$, $[1,0,0,0,1,10]$ & \\

$964$ & $[3,1,0,1,-1,2]$ & $[3,0,-2,2,-\frac{5}{2},\frac{5}{2}]$ & $[0,0,1,1,4,3]$, $[0,1,0,0,5,5]$ & \textbf{953}

\end{tabular}
\label{table-EII-110101}
\end{table}

\begin{table}[H]
\centering
\caption{Infinitesimal character $[1,0,0,1,0,1]$}
\begin{tabular}{lcccc}
 $\# x$ & $\lambda$ & $\nu$ & Spin LKTs  & $\# x^\prime$ \\
\hline
$1787_{\clubsuit}$ & $[1,1,1,4,-1,2]$ &  $[1,0,0,1,0,1]$ & $[1,0,3,0,1,1]$, $[1,1,1,1,1,3]$&\\
       &                  &                  &  $[2,0,1,0,2,5]$ &  \\

$1782_{\clubsuit}$ &$[1,0,1,4,-1,2]$ & $[1,0,0,1,0,1]$    & $[0,0,1,0,0,9]$, $[1,0,3,0,1,1]$\\
       &                  &                 &$[1,1,1,1,1,3]$, $[2,0,1,0,2,5]$  & \\

$1746_{\clubsuit}$  &$[3,0,-1,3,-1,3]$ & $[\frac{3}{2},-\frac{1}{2},-\frac{1}{2},\frac{3}{2},-\frac{1}{2},\frac{3}{2}]$ & $[0,0,4,0,0,0], [0,1,0,1,0,8]$ & \\
        &                 &                       & $[0,2,0,2,0,4], [1,1,0,1,1,6]$ & \\

$1726_{\clubsuit}$  &$[3,-1,0,2,0,3]$ &  $[\frac{3}{2},-1,0,1,0,\frac{3}{2}]$&  $[2,0,2,0,2,0]$, $[2,1,0,1,2,2]$ &\\
        &                 &                 &  $[3,0,0,0,3,4]$  &\\

$1537^*$ & $[3,-3,-2,6,-1,1]$ & $[1,-2,-1,3,-1,1]$ &$[1,1,1,1,1,3]$, $[2,1,0,1,2,2]$ & \\

$1377_{\clubsuit}$ &$[1,-2,-1,5,-1,1]$ & $[0,-2,-\frac{1}{2},3,-\frac{1}{2},0]$ & $[0,0,1,0,0,9]$ & \\

$1268^*$ & $[1,0,-2,5,-2,1]$ & $[1,0,-2,3,-2,1]$ &
$[0,0,1,0,0,9]$, $[0,1,0,1,0,8]$ & \\

$1267^*$ & $[1,0,-2,5,-2,1]$ & $[1,0,-2,3,-2,1]$ & $[0,2,0,2,0,4]$, $[1,1,1,1,1,3]$ & \\

$850_{\clubsuit}$ & $[3,0,-2,3,-2,3]$ & $[2,0,-2,2,-2,2]$ & $[0,0,0,0,0,10]$ & \\
$559$ & $[2,-1,-2,4,-2,2]$ & $[1,-\frac{3}{2},-\frac{3}{2},\frac{5}{2},-\frac{3}{2},1]$ &$[1,0,1,1,2,4]$, $[2,1,0,1,2,2]$ &\\
 & & & $[2,1,1,0,1,4]$, $[3,0,0,0,3,4]$ & \\

$1649^*$& $[3,-1,0,4,-3,3]$ & $[1,-\frac{3}{2},0,\frac{5}{2},-\frac{3}{2},1]$   & $[ 2,0,2,1,0,2]$, $[2,1,1,0,1,4]$ & $\textbf{1645}^*$\\
& & & $[3,0,1,1,1,1]$, $[3,1,0,0,2,3]$ & \\

$1403^*$& $[3,-3,-1,4,0,1]$& $[1,-\frac{5}{2},-\frac{1}{2},\frac{5}{2},0,0]$ &$[0,0,2,0,3,3]$,$[1,0,1,0,4,2]$ & $\textbf{1371}^*$\\

$1205$ & $[4,-3,-1,4,-1,1]$ & $[\frac{5}{2},-\frac{3}{2},-\frac{3}{2},\frac{5}{2},-\frac{3}{2},1]$ &
$[0,1,2,0,2,2]$, $[0,3,0,0,0,6]$ & \textbf{1198} \\
& & & $[1,0,1,1,2,4]$, $[1,1,0,1,1,6]$ & \\

$1130$ & $[5,-2,-2,3,0,1]$ & $[3,-1,-2,2,-1,1]$ & $[4,0,0,2,0,0]$, $[4,0,1,0,1,2]$ &
\textbf{1123} \\

$1129$ & $[5,-2,-2,3,0,1]$ & $[3,-1,-2,2,-1,1]$ & $[0,0,2,0,3,3]$,$[0,2,0,0,1,7]$ & \textbf{1124}  \\

$1128$ & $[5,-2,-2,3,0,1]$ & $[3,-1,-2,2,-1,1]$ & $[1,1,1,1,1,3]$,$[1,2,0,1,0,5]$ & \textbf{1122}  \\

$958^*$ & $[2,-1,-1,4,-3,3]$ & $[\frac{3}{2},-\frac{1}{2},-\frac{3}{2},\frac{5}{2},-\frac{5}{2},2]$ &
$ [0,1,0,0,5,1]$, $[1,0,1,0,4,2]$ & $\textbf{956}^*$
\end{tabular}
\label{table-EII-100101}
\end{table}

\begin{table}[H]
\centering
\caption{Infinitesimal character $[1,1,1,0,1,1]$}
\begin{tabular}{lcccc}
 $\# x$ & $\lambda$ & $\nu$ & Spin LKTs  & $\# x^\prime$ \\
\hline
$1789_{\clubsuit}^*$ & $[1,1,2,0,2,1]$ & $[1,1,1,0,1,1]$&$[0,0,0,0,0,2]+n\beta$, $1\leq n\leq 4$ & \\

$1225$ & $[1,4,1,-1,1,1]$ & $[1, \frac{9}{2},1,-\frac{7}{2},1,1]$ & $[0,0,3,0,0,7]$, $[0,0,4,0,0,6]$ & \\
& & & $[0,1,2,1,0,8]$ & \\

$1154$ & $[1,3,2,-2,2,1]$ & $[1, 4, \frac{3}{2}, -4, \frac{3}{2}, 1]$ & $[4,0,0,0,4,0]$, $[4,0,1,0,4,1]$ & \\

$1624$ & $[1,1,4,-1,1,1]$ & $[1,1,4,-3,1,1]$ &
$[0,3,0,0,0,0]+n\beta$, $0\leq n\leq 2$ & \textbf{1623}\\

$1534$ & $[2,2,2,-1,1,1]$ & $[\frac{3}{2},\frac{3}{2},\frac{7}{2},-\frac{7}{2},2,0]$ &
$[0,0,0,0,3,7]+n\beta$, $0\leq n\leq 2$ & \textbf{1517}
\end{tabular}
\label{table-EII-111011}
\end{table}

\centerline{\scshape Acknowledgements} Dong is supported by the National Natural Science Foundation of China (grant 12171344).
We express our sincere gratitude to the following mathematicians: to Prof.~Adams for offering their list of special unipotent representations, to Prof.~Pand\v zi\'c for telling us the idea of deducing Theorem \ref{thm-main}, to Prof.~Vogan for guiding us through Section \ref{sec-coroots},
to Daniel Wong for helpful discussions pertaining to Example \ref{exam-weakly-good}, and to Jia-Jun Ma for showing us how to save the \texttt{atlas} screen outputs into a file. We sincerely thank an anonymous referee for giving us excellent suggestions.


\begin{thebibliography}{99}
\bibitem{ALTV} J.~Adams, M.~van Leeuwen, P.~Trapa, and D.~Vogan, \emph{Unitary representations of real reductive groups}, Ast{\'erisque} \textbf{417} (2020).


\bibitem{DDH}  L.-G.~Ding,  C.-P.~Dong and H.~He,
\emph{Dirac series for $E_{6(-14)}$}, J. Algebra \textbf{590} (2022), 168--201.

\bibitem{DDY}  J.~Ding,   C.-P.~Dong and L.~Yang,
\emph{Dirac series for some real exceptional Lie groups}, J. Algebra \textbf{559} (2020), 379--407.

\bibitem{D13}  C.-P.~Dong,
\emph{On the Dirac cohomology of complex Lie group representations},
Transform. Groups \textbf{18} (1) (2013), 61--79. [Erratum: Transform.
Groups \textbf{18} (2) (2013), 595--597.]

\bibitem{D20} C.-P.~Dong,
\emph{Unitary representations with Dirac cohomology: finiteness in the real case},
Int. Math. Res. Not. IMRN \textbf{2020} (24), 10277--10316.


\bibitem{D21}  C.-P.~Dong,
\emph{On the Helgason-Johnson bound},
Israel J. Math., accepted.

\bibitem{DH} C.-P.~Dong and J.-S.~Huang,
\emph{Dirac cohomology of cohomologically induced modules for
reductive Lie groups}, Amer. J. Math. \textbf{137} (2015), 37--60.

\bibitem{DW21-1} C.-P.~Dong and K.D.~Wong,
\emph{On the Dirac series of $U(p, q)$}, Math. Zeit. \textbf{298} (2021), 839-859.

\bibitem{DW21} C.-P.~Dong and K.D.~Wong,
\emph{Dirac index of some unitary representations of $Sp(2n, \bbR)$ and $SO^*(2n)$}, preprint 2021, arXiv:2012.07942.

\bibitem{FV} First Author and D.~Vogan, \emph{Affine Weyl group alcoves}, preprint 2020.

\bibitem{HC} Harish-Chandra,
\emph{Harmonic analysis on real
reductive Lie groups. I. The theory of the constant term},  J. Funct. Anal. \textbf{19} (1975), 104--204.

\bibitem{H15} J.-S.~Huang, \emph{Dirac cohomology, elliptic representations and endoscopy},  Representations of reductive groups, pp.~241--276, Progr. Math. vol.~\textbf{312}, Birkh\"auser/Springer, Cham, 2015.

\bibitem{HP} J.-S. Huang and P.~Pand\v zi\'c, \emph{Dirac
cohomology, unitary representations and a proof of a conjecture of
Vogan}, J. Amer. Math. Soc.  \textbf{15} (2002), 185--202.

\bibitem{HJ} S.~Helgason, K.~Johnson, \emph{The bounded spherical functions on symmetric spaces}, Adv. Math.
\textbf{3} (1969), 586--593.

\bibitem{Kn} A.~Knapp, \emph{Lie Groups, Beyond an Introduction}, Birkh\"{a}user, 2nd Edition, 2002.

\bibitem{Ko} B.~Kostant, \emph{Lie algebra cohomology and the generalized Borel-Weil theorem},
Ann. of Math. \textbf{74} (1961), 329--387.

\bibitem{KV} A.~Knapp and D.~Vogan,  \emph{Cohomological induction and unitary
representations}, Princeton Univ. Press, Princeton, N.J., 1995.

\bibitem{MPVZ} S.~Mehdi, P.~Pand\v zi\'c,  D.~Vogan and R.~Zierau, \emph{Dirac index and associated cycles of Harish-Chandra modules}, Adv. Math. \textbf{361} (2020), 106917, 34 pp.

\bibitem{Pan} P.~Pand\v zi\'c, \emph{Dirac cohomology and the bottom layer
$K$-types}, Glas. Mat. \textbf{45} (2010), no.2, 453-460.

\bibitem{Pa} R.~Parthasarathy, \emph{Dirac operators and the discrete
series}, Ann. of Math. \textbf{96} (1972), 1--30.

\bibitem{Pa2} R.~Parthasarathy, \emph{Criteria for the unitarizability of some highest weight modules},
Proc. Indian Acad. Sci. \textbf{89} (1) (1980), 1--24.

\bibitem{Paul} A.~Paul, \emph{Cohomological induction in Atlas}, slides of July 14, 2017,  available from {\rm http://www.liegroups.org/workshop2017/workshop/presentations/Paul2HO.pdf.}

\bibitem{PRV} K.~R.~Parthasarathy, R.~Ranga Rao, and
S.~Varadarajan, \emph{Representations of complex semi-simple Lie
groups and Lie algebras}, Ann. of Math. \textbf{85} (1967),
383--429.

\bibitem{Sa} S.~Salamanca-Riba, \emph{On the unitary dual of real reductive Lie groups and the $A_{\mathfrak{q}}(\lambda)$ modules: the
strongly regular case}, Duke Math. J. \textbf{96} (3) (1999),  521--546.

\bibitem{St} R.~Steinberg, \emph{Endomorphisms of linear algebraic
groups}, Memoirs of the AMS \textbf{80} (1968).

\bibitem{SV} S.~Salamanca-Riba, D.~Vogan, \emph{On the classification of unitary representations of reductive Lie
groups}, Ann. of Math. \textbf{148} (3) (1998), 1067--1133.

\bibitem {Vog80} D.~Vogan,
\emph{Singular unitary representations},  Noncommutative harmonic
analysis and Lie groups (Marseille, 1980),  506--535.

\bibitem {V1} D.~Vogan, \emph{Representations of real reductive Lie groups}, Birkh\"auser, 1981.


\bibitem{Vog84} D.~Vogan, \emph{Unitarizability of certain series of representations},
Ann. of Math. \textbf{120} (1984), 141--187.


\bibitem{Vog97} D.~Vogan, \emph{Dirac operators and unitary
representations}, 3 talks at MIT Lie groups seminar, Fall 1997.

\bibitem{Wo61} J.~A.~Wolf, \emph{Complex homogeneous contact manifolds and quaternionic symmetric spaces}, J. Math. and Mech. \textbf{14} (1965), 1033--1047.

\bibitem{At} Atlas of Lie Groups and Representations, version 1.0, January 2017. See www.liegroups.org for more about the software.

\bibitem{AtV} Weakly seminars on using the software \texttt{atlas}. See http://math.mit.edu/\texttildelow dav/atlassem/

\bibitem{LSU} A complete list of special unipotent representations of real exceptional groups, see  http://www.liegroups.org/tables/unipotentExceptional/.
\end{thebibliography}
\end{document}